\def\BibTeX{{\rm B\kern-.05em{\sc i\kern-.025em b}\kern-.08em
    T\kern-.1667em\lower.7ex\hbox{E}\kern-.125emX}}
\newtheorem{Definition}{Definition}
\newtheorem{Lemma}{Lemma}
\newtheorem{Proposition}{Proposition}
\newtheorem{Remark}{Remark}
\DeclareMathOperator*{\argmax}{argmax}
\DeclareMathOperator*{\argmin}{argmin}
\DeclareMathOperator{\diag}{diag}
\newcommand{\zw}[1]{\textcolor{blue}{#1}}
\begin{document}

\title{AirBreath Sensing: Protecting Over-the-Air Distributed Sensing Against Interference}
\author{{Zhanwei~Wang},~{Mingyao~Cui},~{Huiling~Yang},~{Qunsong~Zeng},
{Min~Sheng},~\IEEEmembership{Fellow,~IEEE},~and~{Kaibin~Huang},~\IEEEmembership{Fellow,~IEEE}
\thanks{Z.~Wang, M.~Cui, H.~Yang, Q.~Zeng, and K. Huang are with the Department of Electrical and Electronic Engineering, The University of Hong Kong, Hong Kong SAR, China (Email: \{zhanweiw, mycui, hlyang,  qszeng, huangkb\}@eee.hku.hk).
M. Sheng is with the State Key Laboratory of Integrated Service Networks, Institute of Information Science, Xidian
University, Xi’an, Shaanxi, China. (Email:
msheng@mail.xidian.edu.cn).
 Corresponding authors: Q. Zeng; K. Huang.
}
  }


\maketitle

\vspace{-1cm}

\begin{abstract}
A distinctive function of sixth-generation (6G) networks is the integration of distributed sensing and edge artificial intelligence (AI) to enable intelligent perception of the physical world. 
This resultant platform, termed integrated sensing and edge AI (ISEA), is envisioned to enable a broad spectrum of Internet-of-Things (IoT) applications, including remote surgery, autonomous driving, and holographic telepresence. 
Recently, the communication bottleneck confronting the implementation of an ISEA system is overcome by the development of over-the-air computing (AirComp) techniques, which facilitate simultaneous access through over-the-air data feature fusion. 
Despite its advantages, AirComp with uncoded transmission remains vulnerable to interference.
To tackle this challenge, we propose \emph{AirBreath sensing}, a spectrum-efficient framework that cascades feature compression and spread spectrum to mitigate interference without bandwidth expansion. 
This work reveals a fundamental tradeoff between these two operations under a fixed bandwidth constraint: increasing the compression ratio may reduce sensing accuracy but allows for more aggressive interference suppression via spread spectrum, and vice versa. 
This tradeoff is regulated by a key variable called \emph{breathing depth}, defined as the feature subspace dimension that matches the processing gain in spread spectrum.
To optimally control the breathing depth,  we mathematically characterize and optimize this aforementioned tradeoff by designing a tractable surrogate for sensing accuracy, measured by classification discriminant gain (DG). 
Experimental results on real datasets demonstrate that AirBreath sensing effectively mitigates interference in ISEA systems, and the proposed control algorithm achieves near-optimal performance as benchmarked with a brute-force search.

 \end{abstract}

\begin{IEEEkeywords}
Integrated sensing and edge AI, spread spectrum, over-the-air computation, and interference suppression.
\end{IEEEkeywords}

%
\section{Introduction}
\label{Sec:intro}

The \emph{sixth-generation} (6G) networks feature two new functions. One is distributed sensing that enables environmental perception through cooperation between many on-device sensors~\cite{Saad_6G,CHEN2025,Huawei2022}. 
The other is edge \emph{artificial intelligence} (AI) that is envisioned to support real-time inference and decision-making via deploying distributed AI and machine learning algorithms at the network edge~\cite{Shi2020CommEffEdgeAI,ZW2024ultra-LoLa,yang2025optBS}. 
The natural convergence of distributed sensing and edge AI, termed \emph{integrated sensing and edge AI} (ISEA), is poised to transform a wide range of Internet-of-Things applications, for example, remote surgery, autonomous driving, and holographic telepresence~\cite{liu2025ISEAearly,cui2025quansensing,wei20223u,wei2023differential}. 
Nevertheless, the implementation of ISEA in practical mobile networks faces two challenges. One is a communication bottleneck resulting from the transmission of high-dimensional features from many sensors to an edge server for aggregation and inference. 
A popular solution is to realize over-the-air feature aggregation using a technique called \emph{over-the-air computing} (AirComp)~\cite{GX-BBA}. The other challenge is the vulnerability of transmission to interference from coexisting services, neighbouring cells, and attacks. To tackle the challenge, we consider an over-the-air distributed sensing system, which employs AirComp, and propose the framework of AirBreath sensing, which provides a spectrum-efficient way to suppress interference while maintaining satisfactory sensing performance.

The basic operation of an ISEA system is to upload and aggregate sensor observations as input into an inference model at the server. 
To be specific, each sensor extracts features from observations (e.g., images) using a lightweight, pre-trained model and transmits these features to the edge server for aggregation and inference. 
In the area of ISEA (also called edge-inference), research focuses on overcoming communication bottlenecks arising from uploading high-dimensional features. 
The first approach focuses on overcoming the communication bottleneck through overhead-reduction techniques, including feature compression~\cite{liu2025semantic,zw2025AIoutage,huang2025d}, quantization~\cite{zeng2025ultra-lola}, uncoded transmission~\cite{zeng2024knowledge}, and access control strategies~\cite{who2com,cang2024joint}. 
The second approach pertains to the split-inference architecture, where the global AI model is divided into on-device and server components to optimize overall system performance. Relevant research exploits an inherent degree-of-freedom, namely splitting point, to enhance the \emph{end-to-end} (E2E) inference performance under constraints like bandwidth and latency~\cite{Chen-TWC-2019,ZJ-CoM-2020}. 
When there are many sensors, the preceding two approaches are challenged by the scalability issue as the required radio resources for multi-access (or otherwise latency) increases linearly with the number of sensors. 
The issue can be addressed by the third approach, i.e., the aforementioned  AirComp, referring to a class of task-oriented multi-access schemes aimed at mitigating the communication bottleneck. 
Unlike traditional orthogonal access methods, AirComp enables simultaneous transmission and over-the-air aggregation of features by leveraging waveform superposition (see the survey~\cite{OTA_survey}).
Researchers have applied AirComp to realize over-the-air max-pooling for distributed multi-view sensing~\cite{Zhiyan-AirPooling} and over-the-air voxel fusion for distributed point-cloud sensing~\cite{liu2025over}. On the other hand, a unique tradeoff in over-the-air sensing with AirComp that varies the number of active sensors induces a tradeoff between channel noise and sensing accuracy due to the aggregation gain and the number of views, respectively. Efforts have been made to characterize and optimize this tradeoff through designing sensor-selection strategies~\cite{Xu-JSAC}.

For its efficiency, AirComp is also integrated into the proposed AirBreath framework. 
However, a key limitation of AirComp is its reliance on uncoded linear analog modulation, which exposes signals to interference and noise. 
Extensive research has been devoted to addressing this issue by exploiting cooperation among users or across cells. 
One strategy involves the server controlling the transmission power of devices according to the criterion of minimum AirComp error, which is measured by the discrepancy between the channel-distorted aggregated features of sensing data and their ground truths~\cite{cao2020cooperative,cao2020optimized,zhang2022interference}. 
In multi-cell systems, classic interference alignment algorithms have been further developed for AirComp to maximize its \emph{degrees of freedom} (DoF) by partitioning the signal space into two orthogonal subspaces: one for AirComp and the other for containing interference~\cite{qiao-twocell}. 
While earlier works primarily used AI-agnostic metrics, recent studies have focused on intelligent sensing, estimation, and actuation (ISEA) scenarios, such as human motion recognition tasks. 
These studies aim to mitigate channel noise in AirComp systems by considering E2E performance metrics related to sensing accuracy, such as average discriminant gain, and by developing advanced AirComp schemes through the joint optimization of sensing power, feature precoding, and receive beamforming~\cite{wen2023task,wen2023taskOTA,huang2025visual}. 
Despite these advances, existing approaches generally require accurate \emph{channel state information} (CSI) on interference channels. 
This is impractical in the absence of multi-cell cooperation or when interference originates from external networks or jammers.

Recently, a new technique called \emph{spectrum breathing} has been proposed to address this issue in over-the-air \emph{federated learning} (FL) systems, enabling interference mitigation without requiring CSI of the interference link~\cite{ZW_spectrum}. 
This approach involves cascading gradient-pruning and spread-spectrum operations at the server during each FL communication round. 
Under a spectrum constraint, gradient pruning is first applied to reduce the data bandwidth to create room for subsequent spectrum spreading, which facilitates interference suppression through spectrum despreading at the receiver. 
The iterative process of spectrum contraction (via pruning) and spreading during learning mimics the rhythm of human breathing, inspiring the technique’s name.
In FL with spectrum breathing, a fundamental tradeoff emerges: gradient pruning slows down convergence, while spread spectrum enhances interference suppression. 
The tradeoff is regulated by a variable called \emph{breathing depth}, defined as the pruning ratio (or, equivalently, the processing gain for spread spectrum).
The design of its optimal control strategy by balancing the preceding tradeoff is crucial for maximizing the spectrum breathing's effectiveness. 
This requires tractable analysis of FL convergence speed as a function of breathing depth.

For its practicality, the spectrum-breathing technique is adopted in the current work to develop a strategy for enhancing the robustness of ISEA systems against interference, leading to the proposed framework of \emph{AirBreath sensing}. 
Although both FL and ISEA systems employ over-the-air aggregation, the optimal control of breathing depth fundamentally differs between the two due to distinct algorithms and performance metrics. 
Specifically, an ISEA system focuses on inference and assumes a pre-trained AI model without involving learning as in its FL counterpart; 
the E2E performance metric for ISEA systems is sensing accuracy, in contrast to the convergence speed emphasized in FL.
As a result, the optimal control of breathing depth in ISEA systems gives rise to two new challenges. 
The first is to mathematically derive the tradeoff between accuracy degradation caused by feature pruning and accuracy enhancement enabled by interference suppression via spread spectrum. 
The second is to devise an optimal control strategy for breathing depth that effectively balances this tradeoff to maximize E2E sensing accuracy.
The design of the AirBreath framework aims to address these challenges. 
To address these challenges, we propose an ISEA-centric spectrum breathing framework for over-the-air distributed sensing, termed AirBreath sensing. Its novelty lies in the cascaded integration of feature compression and spread spectrum under bandwidth constraints. The fundamental difference of AirBreath sensing from the FL counterpart lies in the approach to bandwidth contraction. Specifically, AirBreath sensing reduces feature dimensionality through a low-rank compression matrix, enabling importance-aware feature compression instead of random element pruning in~\cite{ZW_spectrum}. 
The major contributions and key results are highlighted as follows.
\begin{itemize}
    \item \textbf{Derivation of the fundamental tradeoff in AirBreath sensing:}
    We analyze the tradeoff between the effects of feature compression and spread spectrum on sensing accuracy, referred to as the \emph{compression-spreading tradeoff}. 
    For analytical tractability, we assume a widely used \emph{Gaussian mixture model} (GMM) for the feature distribution and employ linear classification, both of which will be generalized in the second part of this work. 
    The receive \emph{discriminant gain} (DG), a key indicator of sensing accuracy, is shown to increase monotonically with both the dimensionality of the pruned feature subspace and the processing gain. 
    Then when a bandwidth constraint is imposed, the compression-spreading tradeoff arises: increasing the transmitted feature dimensionality improves the receive DG, but the resulting decrease in processing gain makes feature transmission more susceptible to interference, thereby reducing the receive DG. 
    This tradeoff necessitates the optimization of the feature dimensions, termed the breathing depth of the considered ISEA system, to maximize sensing performance.

    \item \textbf{Optimal control of AirBreath sensing:}
    The optimal control strategy for breathing depth is developed by balancing the aforementioned tradeoff. 
    To this end, we derive a closed-form expression for the receive DG under importance-aware feature pruning. 
    For tractable design, this DG is accurately approximated by a continuous surrogate function, which is proven to be unimodal and to possess a unique maximum. 
    Using the surrogate as the optimization objective, the resulting optimal breathing depth is derived and shown to increase monotonically with both the \emph{signal-to-interference ratio} (SIR) and the number of active sensors participating in feature aggregation. 
    Furthermore, the design is extended to the more complex case of  \emph{convolutional neural network} (CNN) classification with a general feature distribution. 
    In this case, the DG surrogate is constructed as the combined sum of the effects of feature compression and spread spectrum.
    This enables the derivation of a semi-analytic optimal breathing depth, where a parameter is fitted using a training dataset.

    \item \textbf{Experimental results:}  
    The compression-spreading tradeoff and proposed surrogates are validated using synthetic and real-world datasets (e.g., GMM and ModelNet~\cite{ModelNet-Ref}, respectively). The proposed AirBreath sensing technique closely approximates the optimal performance of a brute-force search and outperforms methods with alternative interference suppression techniques.

\end{itemize}

The organization of the paper is outlined below. Section \ref{sec:ModelandMetrics} introduces the system model and defines the performance metric. In Section \ref{sec:transceiver}, we present the transceiver design of AirBreath sensing. 
A tradeoff in AirBreath sensing is uncovered in Section \ref{sec:funde-tradeoff}.
Then the found tradeoff is optimized for linear and CNN classification in Sections \ref{sec:optimization} and \ref{sec:ext_CNN}, respectively. Section \ref{sec:experiments} reports experimental results, while concluding remarks are provided in Section \ref{sec:conclusion}.

%

\section{System Models and Metrics}
\label{sec:ModelandMetrics}

\begin{figure}[t!]
\centering
\includegraphics[width=1\columnwidth]{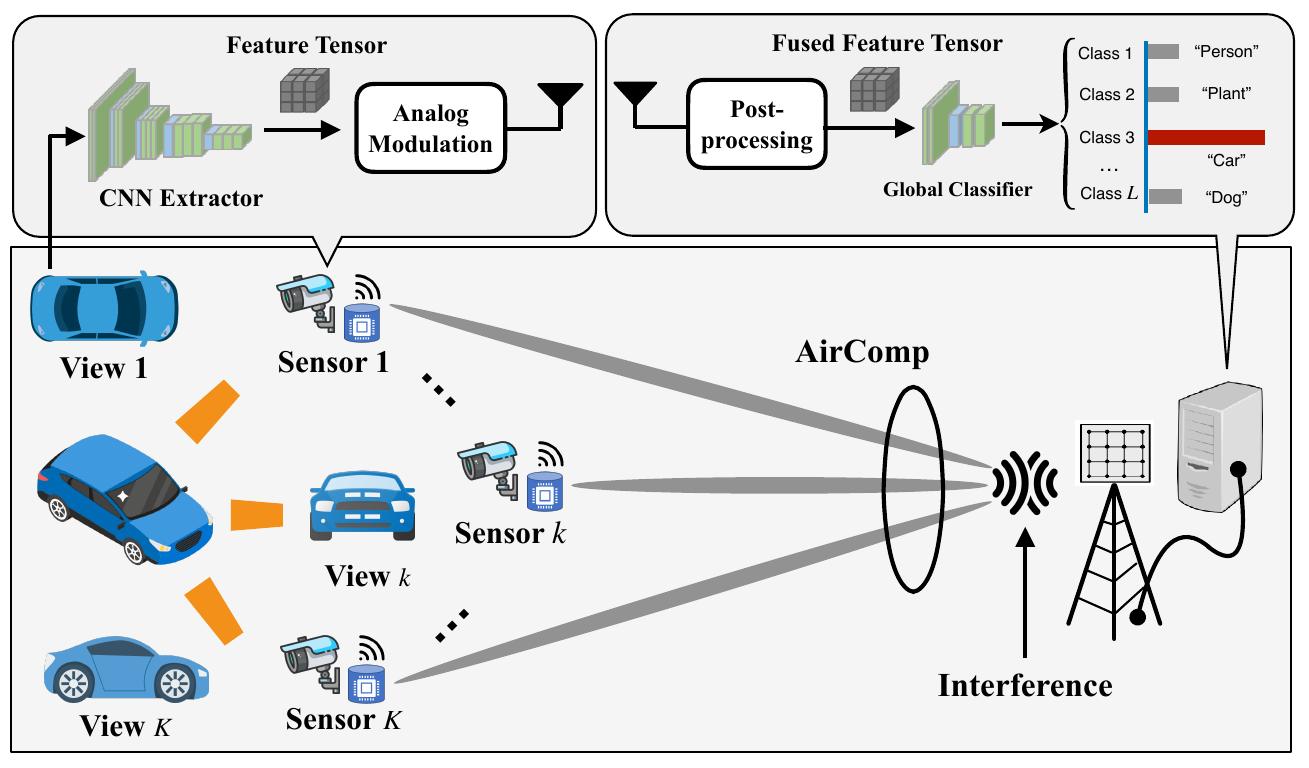}
\caption{AirComp-based ISEA system with mult-view sensing.}
\label{fig:system_diag}
\vspace{-5mm} 
\end{figure}

Consider the ISEA system illustrated in Fig.~\ref{fig:system_diag}, where an edge server performs remote object classification using AI-enabled multi-view sensing from 
$K$ distributed sensors.
Time is organized into recurring rounds, each initiated by the edge server to identify a sequence of objects.
In every round, the edge server requests the distributed sensors to upload feature vectors extracted from their captured observations.
These uploaded feature vectors are then jointly processed to perform collaborative inference and recognize the corresponding object.


\vspace{-2mm}
\subsection{Distributed Sensing Model}

\subsubsection{Local Data Distribution}
Sensor $k$  acquires the observation of the object and then extracts the feature vector, denoted as $\mathbf{x}_k\in \mathbb{R}^{D}$, using a local backbone model. 
While CNN classification is designed for generic distributions, as for linear classification, we assume the subsequent data distribution for tractability.
The feature vectors are sampled from a GMM~\cite{figueroa2019semi, ZW2024ultra-LoLa}, where each $\mathbf{x}_k$ is assigned to one of $L$ classes with equal prior probability. 
For class $\ell$, the associated feature vector subjects as a Gaussian with mean vector $\boldsymbol{\mu}_\ell \in \mathbb{R}^{D}$ and a common covariance matrix $\mathbf{C} \in \mathbb{R}^{D \times D}$. 
The mean vectors, serving as class centroids, vary across classes, whereas the covariance matrix is assumed to be identical for all classes~\cite{ZW2024ultra-LoLa,zeng2024knowledge}. 
Thereafter, the joint distribution of the $K$ feature vectors is given by:
\begin{equation} \label{eq:data_dis} (\mathbf{x}_1, \ldots, \mathbf{x}_K) \sim \frac{1}{L} \sum_{\ell=1}^L \prod_{k=1}^K \mathcal{N}\left(\mathbf{x}_k \mid \boldsymbol{\mu}_\ell, \mathbf{C}\right). \end{equation}
where  $\mathcal{N}\left(\mathbf{x}_k \mid \boldsymbol{\mu}_\ell, \mathbf{C}\right)$ represents the Gaussian \emph{probability density function} (PDF).

\subsubsection{Global Classification}
Subsequently, the extracted feature vectors ${\mathbf{x}_k}$ are sent by the distributed sensors to the server for object recognition.
They are first aggregated into a single feature vector $\overline{\mathbf{x}}$ using the commonly adopted average operation, i.e., 
\[
\overline{\mathbf{x}}=\frac{1}{K}\sum_{k=1}^K\mathbf{x}_k.
\]
The aggregated vector is then passed to the server-side classifier for inference. 
Two types of classifiers are considered.
\begin{itemize}
    \item \textit{Linear Classification:}  
    For analytical tractability, we adopt a \emph{maximum likelihood} (ML) classifier for the distribution in~\eqref{eq:data_dis}, in which the decision boundary between any two classes is a hyperplane in the feature space.  
    With uniform class priors, the ML classifier is equivalent to a \emph{maximum a posteriori} (MAP) classifier. The estimated label $\hat{\ell}$ is obtained as
    \begin{equation}
    \label{Maha_min_classifier}
        \hat{\ell} = \argmax_{\ell} \log \Pr(\overline{\mathbf{x}}|\ell)
        = \argmin_{\ell} z_\ell(\overline{\mathbf{x}}),
    \end{equation}
    where $z_\ell(\overline{\mathbf{x}})$ denotes the squared Mahalanobis distance between $\overline{\mathbf{x}}$ and the class centroid $\boldsymbol{\mu}_\ell$:
    \begin{equation}
    z_\ell(\overline{\mathbf{x}}) = (\overline{\mathbf{x}} - \boldsymbol{\mu}_\ell)^{\mathsf{T}} \mathbf{C}^{-1} (\overline{\mathbf{x}} - \boldsymbol{\mu}_\ell).
    \end{equation}


\item \textit{CNN Classification:}  
We consider a more sophisticated, analytically intractable case where feature vectors are extracted from observations using a CNN. The CNN is split into a sensor-side sub-model and a server-side sub-model, denoted by $f_{\sf sen}(\cdot)$ and $f_{\sf ser}(\cdot)$, respectively~\cite{wei2025pipelining}. The identical sensor-side model $f_{\sf sen}(\cdot)$ generates the local feature vector $\mathbf{x}_k$ from the $k$-th observation. After view aggregation, the edge server applies $f_{\sf ser}(\cdot)$ to the aggregated feature vector $\overline{\mathbf{x}}$ to produce confidence scores $\{c_1,\dots,c_L\} = f_{\sf ser}(\overline{\mathbf{x}})$, where $c_\ell$ is the confidence score for class $\ell$. The predicted label is $\hat{\ell} = \argmax_{\ell} c_\ell$.

\end{itemize}

\vspace{-2mm}

\subsection{Communication Model}

As illustrated in Fig.~\ref{fig:system_diag}, feature uploading via AirComp is affected by interference. 
To mitigate this effect, each transmitted signal undergoes \emph{feature compression} followed by \emph{spread spectrum}, whose details are provided in Sec.~\ref{sec:transceiver}. 
For clarity, we introduce the following notation. 
The $s$-th element of the compressed feature vector from sensor $k$ is scrambled by spread spectrum into a sequence $\tilde{\mathbf{x}}_{k,s}$, where each entry is referred to as a chip, and the $m$-th chip is denoted by $\tilde{X}_{k,s,m}$.

Using the notations above, AirComp realizes chip-level feature fusion and is modeled as follows.
In particular, each sensor linearly modulates spreading chips and uploads them to the edge server with a scheduled timing advance~\cite{ZW_spectrum}.
This ensures the waveforms are superposed at the edge server and realize the feature fusion naturally~\cite{GX-BBA}.
The simultaneous transmission of $(s,m)$-th chip, i.e., $\tilde{X}_{k,s,m}$, enables AirComp to yield received chip symbol, given by
\begin{equation}
\label{eq:rece_signal}
    {Y}_{s,m}= \sum_{k=1}^K p_k h_{k} \tilde{{X}}_{k,s,m} +{Z}_{s,m},
\end{equation}
where $p_{k}$  and $h_{k}\sim \mathcal{CN}(0,1)$ represent the precoding coefficient of sensor $k$ and associated channel gain with Rayleigh distribution, respectively.
${Z}_{s,m}\sim \mathcal{CN}(0,P_I)$ denotes the channel interference subjecting to Gaussian distribution.
We model the worst-case interference as Gaussian over the chip duration~\cite{Worst-Gaussian,ZW_spectrum} and neglect channel noise in the interference-limited system.

\vspace{-3mm}

\subsection{Performance Metric}

We consider the following two metrics that measure the performance of sensing tasks.

\subsubsection{Sensing Accuracy}

Consider a multi-view sensing task in which decision-making is based on object classification using well-trained AI models.
The system performance, quantified as sensing accuracy, is defined by the probability of correct classification, following the principles of statistical learning theory~\cite{Friedman-Book-2009}, and is computed as
\begin{equation}
\label{eq:sensing_acc}
    A=\frac{1}{L}\sum_{\ell=1}^L\Pr(\hat{\ell}=\ell\mid \ell),
\end{equation}
where $\Pr(\hat{\ell}=\ell\mid \ell)$ is the probability of correct label prediction  with $\ell$ and  $\hat{\ell}$  being the   ground-truth and inferred label, respectively.

\subsubsection{Discriminant Gain}
The \emph{discriminant gain} (DG)  is a popular metric to quantify the discernibility between two classes within a feature subspace.
Given the fused feature vector $\overline{\mathbf{x}}$, the DG between  class $\ell$ and $\ell'$, denoted as  $ \mathcal{G}_{\ell,{\ell'}}$, is defined as the symmetric Kullback-Leibler divergence~\cite{ZW2024ultra-LoLa}:
\begin{equation}
\begin{split}
    \mathcal{G}_{\ell,{\ell'}} = & \sf{KL}(\mathcal{N}(\boldsymbol{\mu}_\ell,\overline{\mathbf{C}})\,||\, \mathcal{N}(\boldsymbol{\mu}_{\ell'},\overline{\mathbf{C}}))\\
    &+\sf{KL}(\mathcal{N}(\boldsymbol{\mu}_{\ell'},\overline{\mathbf{C}})\,||\,\mathcal{N}(\boldsymbol{\mu}_\ell,\overline{\mathbf{C}}))\\
     =&(\boldsymbol{\mu}_{\ell}-\boldsymbol{\mu}_{\ell'})^{\sf{T}}\overline{\mathbf{C}}^{-1}(\boldsymbol{\mu}_{\ell}-\boldsymbol{\mu}_{\ell'}) \\
\end{split}
\end{equation}
where $\overline{\mathbf{C}}$ is the covariance matrix of the fused feature vector.

\section{Overview of AirBreath Sensing}
\label{sec:transceiver}
\begin{figure}[t!]
\centering
\includegraphics[width=1\columnwidth]{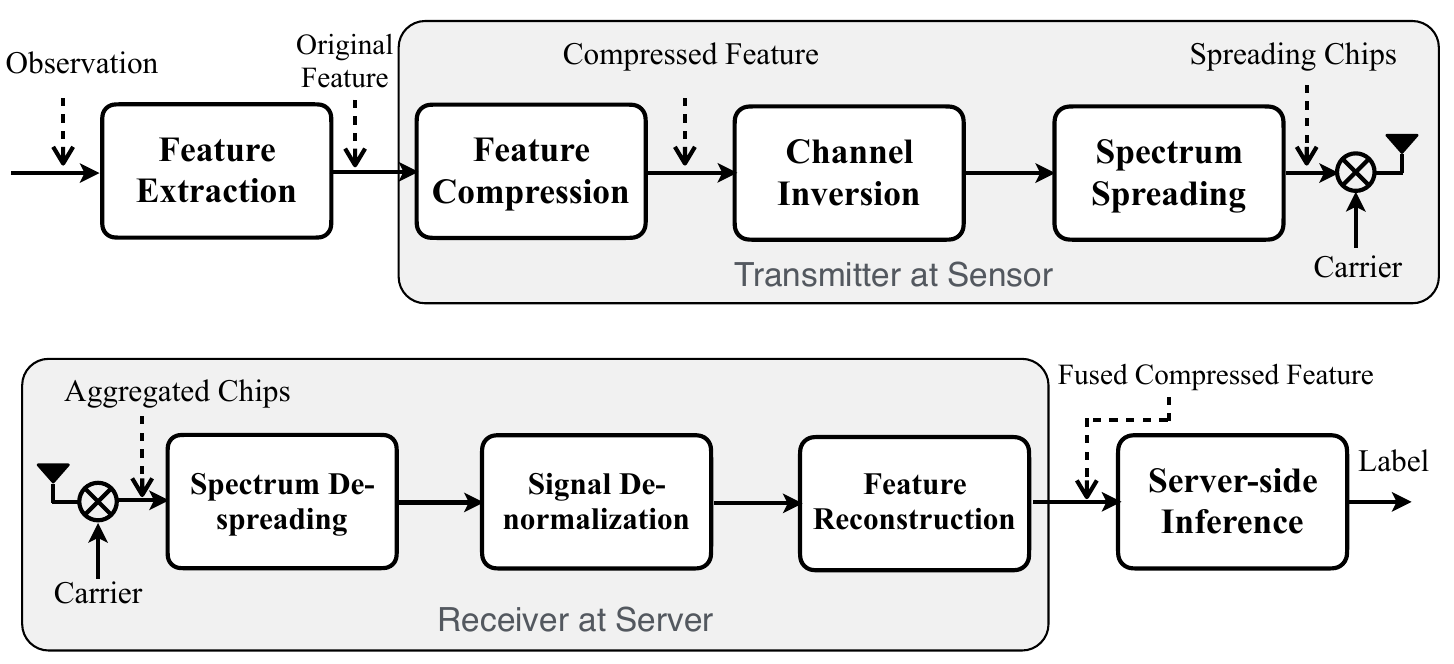}
\caption{The transceiver design of AirBreath sensing.}
\label{fig:framework_dig}
\vspace{-5mm} 
\end{figure}

As illustrated in Fig. \ref{fig:framework_dig}, the transceiver design of the AirBreath sensing that realizes the robust multi-view sensing is provided in the following subsections.

\vspace{-3mm}
\subsection{Transmitter Design}

As is shown in Fig. \ref{fig:framework_dig},
the transmitter is deployed at sensors and comprises three cascaded operations, i.e.,  feature compression, channel inversion, and spectrum spreading.

\subsubsection{Feature Compression}
The operation is to reduce the feature dimensions and create extra bandwidth for the latter operation of spread spectrum.
At sensor $k$, the feature vector $\mathbf{x}_k\in \mathbb{R}^{D}$ extracted from the observation using local pre-trained model is projected onto a low-dimensional subspace spanned by the compression matrix $\mathbf{P}\in \mathbb{R}^{D\times S}$\footnote{
Feature compression can be achieved using a \emph{deep neural network} (DNN), such as \emph{joint source and channel coding} (JSCC)~\cite{jianhao-JSCC,bourtsoulatze2019deep}, but this requires pre-training to handle dynamic channels, introducing additional latency and hindering real-time sensing.
In contrast, a compression matrix offers tractable analysis and avoids these limitations. }.
Here,  $S<D$ is the dimensionality of the subspace.
The compressed feature vector of sensor $k$, denoted as $\hat{\mathbf{x}}_k\in \mathbb{R}^{S}$, is computed by
\begin{equation}
    \hat{\mathbf{x}}_k = \mathbf{P}^{\sf{T}}\mathbf{x}_k.
\end{equation}
Note that the compression matrix $\mathbf{P}$ is determined at the edge server using the DG-based methods, which will be detailed in Sections \ref{sec:optimization} and \ref{sec:ext_CNN}.
To enable feature fusion in a common subspace via AirComp, the same compression matrix is assigned to all sensors.
The communication overhead of broadcasting $\mathbf{P}$ from the edge server to sensors is assumed to be negligible due to the sufficient transmit power at the edge server.
Since the feature statistics may change over inference rounds (e.g., features of different objects), feature normalization is required in each round to satisfy the power constraint. The normalized compressed feature vector is computed as
\begin{equation}
    \hat{\mathbf{x}}^{\sf nor}_k= \frac{\hat{\mathbf{x}}_k - \mu_{\sf nor} \mathbf{1}}{\sigma_{\sf nor}},
\end{equation}
where $\mathbf{1}$ is an all-one vector.
Here, 
$\mu_{\sf nor} 
$ and $ \sigma_{\sf nor}
$  denote the statistical mean and standard variance of $\hat{\mathbf{x}}_k$, respectively \cite{Zhiyan-AirPooling,liu2025semantic}. These two parameters are estimated 
and shared by all sensors and the edge server, enabling the normalized power, i.e., $\frac{1}{S}\mathbb{E}[ \Vert\hat{\mathbf{x}}_{k}^{\sf nor}\Vert^2] =1$.

\subsubsection{Channel Inversion} 
\label{sec:channel_inversion}

Fading channels across distributed sensors present challenges for AirComp-based feature fusion. To mitigate this, truncated channel inversion is employed to achieve signal amplitude alignment at the edge server. We consider block-fading channels, where the channel state $h_k$ remains constant during the feature transmission of sensor $k$. The corresponding transmit power $p_k$ is set as
\begin{equation}
  \label{CI_PC}
  p_k=
  \begin{cases}
\frac{\sqrt{P_{0}}h_k^*}{|h_k|^2},&|h_k|^2\geq h_{\sf th}, \\
    0,&|h_k|^2< h_{\sf th},
  \end{cases}
\end{equation}
where $P_{0}$ is the  signal-magnitude-alignment factor.
Given the interference power in \eqref{eq:rece_signal}, 
$\gamma_{\sf{sen}}=\frac{P_0}{P_I}$ denotes the receive SIR of each sensor after channel inversion.
The activation probability that measures the probability of successful feature uploading, denoted by $P_{\sf act}$, is then obtained as 
\begin{equation}
  \label{active-prob}
P_{\sf act}=\Pr(|h_k|^2\geq h_{\sf th})=\exp{(-h_{\sf th})}.
\end{equation}
We denote the subset of transmitting sensors by $\mathcal{K}=\{k\in\{1,2,\ldots,K\} \mid |h_k|^2\ge h_{\sf th}\}$.
The threshold $h_{\sf th}$ is chosen such that $\mathbb{E}[|p_k|^2]=  P_0 E_1(h_{\sf th})\le P_{\max}$, where $E_1(x)=\int_{x}^\infty \frac{\exp(-t)}{t} dt$ and $P_{\max}$ is the long-term  transmit power constraint of each sensor.

\subsubsection{Spectrum Spreading}
To suppress the interference, spectrum spreading expands the data bandwidth, denoted as $B_D(S)$, into the whole available bandwidth, denoted as $B_W$, using PN sequence. 
The durations of one feature symbol and one chip of the PN sequences are denoted as $T_D(S)=\frac{1}{B_D(S)}$ and $T_C=\frac{1}{B_W}$, respectively.
This gives the PN sequences comprising $G=\frac{T_D(S)}{T_C}$ chips, where $G$ is called the \emph{processing gain}.
The PN sequence here is represented by a vector $\mathbf{f}\in \mathbb{R}^{G}$, wherein each entry, say the $m$-th element $f_m\in \{+1,-1\}$,  is generated at the server through Bernoulli trials with the probability 0.5.
This ensures $\frac{1}{G} \mathbf{f}^{\sf{T}}\mathbf{f}=1$.
The PN sequences $\mathbf{f}\in \mathbb{R}^{G}$ are broadcast to all sensors for initialization of the spreader.
Using the scheduled PN sequences, the spreader transforms the normalized compressed feature vector into 
spreading chips, denoted as $\tilde{\mathbf{X}}_k\in \mathbb{R}^{S\times G}$, by computing
\begin{equation}
    \tilde{\mathbf{X}}_k =  \hat{\mathbf{x}}^{\sf nor}_k\mathbf{f}^{\sf{T}}.
\end{equation}
Here, the $(s,m)$-th element of $ \tilde{\mathbf{X}}_k$ is denoted as $\tilde{{X}}_{k,s,m}$ in \eqref{eq:rece_signal} representing the $m$-th chip of the $s$-th feature element of $\hat{\mathbf{x}}^{\sf nor}_k$.
At last, each sensor modulates the spreading chips onto the dedicated carrier and transmits them to the edge server.

\vspace{-3mm}

\subsection{Receiver Design}

As illustrated in Fig.~\ref{fig:framework_dig}, the receiver comprises three cascaded operations: spectrum despreading, signal denormalization, and feature reconstruction. 
The details of these operations, along with the subsequent server-side inference, are described below.

\subsubsection{Spectrum De-spreading}

This operation aims to mine the desired feature vector from the interference using the despreader.
We assume perfect synchronization between transmitter and receiver to ensure chip-level de-spreading.
At the edge server, the received signal is the superimposed waveforms due to the simultaneous transmission among sensors.
Let $\mathbf{Y}\in \mathbb{R}^{S\times G}$  denote received chips wherein each element  ${Y}_{s,m}$ represents the $(s,m)$-th chip obtained by \eqref{eq:rece_signal}.
By introducing the truncated channel inversion in Sec. \ref{sec:channel_inversion},
the output of the de-spreader, denoted as $\bar{\mathbf{y}}\in \mathbb{R}^{S}$, is expressed as
\begin{equation}
\begin{split}
    \bar{\mathbf{y}} & = \frac{1}{G }\mathbf{Y}\mathbf{f}
    = \frac{1}{G }\sum_{k\in \mathcal{K}} \sqrt{P_0} \hat{\mathbf{x}}^{\sf nor}_k\mathbf{f}^{\sf{T}}\mathbf{f}+\frac{1}{G}\mathbf{Z}\mathbf{f}\\
    &=\sqrt{P_0}\sum_{k \in \mathcal{K}}  \hat{\mathbf{x}}_k^{\sf nor} +\bar{\mathbf{z}},
\end{split}
\end{equation}
where $\mathcal{K} \subseteq\{1,2,\dots,K\}$ represents the set of active sensors;
$\bar{\mathbf{z}} \sim \mathcal{CN}\left(0,  \frac{P_I}{G}\mathbf{I}_S\right)$ represents the interference induced on the normalized compressed fused feature vector, attenuated by the processing gain $G$.

\subsubsection{Signal De-normalization}
In this operation, the effect of signal normalization and truncated channel inversion is eliminated. 
The output of signal de-normalization, denoted as $\hat{\mathbf{y}}$, is provided as
\begin{equation}
\label{eq:input_LC}
\begin{split}
    \hat{\mathbf{y}} &= 
    \frac{\sigma_{\sf nor}}{|\mathcal{K}|\sqrt{P_0}}
    \bar{\mathbf{y}}+\mu_{\sf nor}\mathbf{1} =\frac{1}{|\mathcal{K}|}\sum_{k\in\mathcal{K}} \mathbf{P}^{\sf T} \mathbf{x}_k +\hat{\mathbf{z}}.
\end{split}
\end{equation}
Here, $\hat{\mathbf{z}}\sim \mathcal{CN}\left(0,\frac{\sigma^2_{\sf nor}}{G |\mathcal{K}|^2 \gamma_{\sf sen}}\mathbf{I}_S\right) $ is the induced interference on the compressed feature subspace, where
$\gamma_{\sf sen}= \frac{P_0}{P_I}$ is the receive SIR of each sensor.

\subsubsection{Feature Reconstruction}
This operation is designed for a pre-trained CNN server-side classifier that requires an input feature dimensionality of $D$.
In this context, we  reconstruct $\hat{\mathbf{y}}$ into a $D$-dimensional feature vector, denoted as $\hat{\mathbf{y}}_{\sf cnn}\in \mathbb{R}^{D}$, given as $\hat{\mathbf{y}}_{\sf cnn}=\mathbf{P}\hat{\mathbf{y}}$,
where $\mathbf{P}\in \mathbb{R}^{D\times S}$ is the compression matrix that maps the compressed features onto the original feature space.

\subsubsection{Server-side Inference}
This operation aims to estimate the object label by feeding the output of the AirBreath sensing receiver into the server-side classifier.  The operations for the linear and CNN classifiers are discussed below.
For linear classification, the inferred label is obtained by solving the Mahalanobis distance minimization problem in \eqref{Maha_min_classifier}, given by
\begin{equation}
\label{eq:LC_Maha_min_Y}
    \hat{\ell} = \argmin_{\ell} z_\ell(\hat{\mathbf{y}}).
\end{equation}
Here, $z_\ell(\hat{\mathbf{y}})$ is the squared Mahalanobis distance of input feature vector $\hat{\mathbf{y}}$ from its centroid in the subspace spanned by $\mathbf{P}$, given as
$ z_\ell(\hat{\mathbf{y}})= ( \hat{\mathbf{y}} -\mathbf{P}^{\sf T} \boldsymbol{\mu}_{\ell}) ^{\sf{T}}{\hat{\mathbf{C}}}^{-1}( \hat{\mathbf{y}} -\mathbf{P}\boldsymbol{\mu}_{\ell})$,
where $\hat{\mathbf{C}}$ is the covariance matrix of $\hat{\mathbf{y}}$, given by
\begin{equation}
\label{eq:covariance_matrix}
\hat{\mathbf{C}}=
\frac{1}{|\mathcal{K}|}
\mathbf{P}^{\sf{T}}\mathbf{C}\mathbf{P} + \frac{\sigma^2_{\sf nor}}{G|\mathcal{K}|^2\gamma_{\sf sen}}\mathbf{I}_S. 
\end{equation}

For CNN classification, the reconstructed feature vector $\hat{\mathbf{y}}_{\sf cnn}$ is fed into the server-side classifier and provides the inference label $\hat{\ell}=\argmax_{c_\ell} \{c_1,\dots,c_\ell,\dots,c_L\},
$ where $c_\ell$ is the output confidence score of the $\ell$-th class using the pre-trained server-side CNN classifier $f_{\sf ser}(\hat{\mathbf{y}}_{\sf cnn})$.



\section{Analysis of the Compression-spreading Tradeoff}
\label{sec:funde-tradeoff}

In this section, we focus on linear classification and characterize the effects of the two key operations, i.e., feature compression and spread spectrum, on the receive DG, a tractable surrogate of sensing performance.
Under the constrained system bandwidth, we uncover a tradeoff between these two operations.
These findings motivate the optimization of AirBreath sensing strategies for CNN classification, detailed in Section \ref{sec:ext_CNN}.

\vspace{-3mm}

\subsection{Tractable Surrogate of Sensing Accuracy}

Deriving a closed-form expression for the sensing accuracy in \eqref{eq:sensing_acc} is intractable owing to the coupling among pairwise decision boundaries in multi-class classification. 
To enable analytical tractability, we adopt the received feature vector characterization in Lemma~\ref{lamma:compress_feature_dis} and employ the lower bound on sensing accuracy given in Lemma~\ref{Lemma:Sensing_ACC_LB}.
Specifically, given a feature vector following the GMM in \eqref{eq:data_dis}. 
The output  of 
AirBreath sensing receiver,
described by  \eqref{eq:input_LC}  also follows a GMM, as formalized in the Lemma \ref{lamma:compress_feature_dis}.

\begin{Lemma}[Distribution of Received Feature]
\label{lamma:compress_feature_dis}
Let the compression matrix $\mathbf{P}\in \mathbb{R}^{D\times S}$ satisfy $\text{rank}(P)=S$ and PN sequence comprise $G$ chips. The received feature vector $\hat{\mathbf{y}}$ in \eqref{eq:input_LC} follows GMM, given by
\begin{equation}
\label{eq:dis_pruned_feature}
     \hat{\bf{y}}\sim  \frac{1}{L} \sum_{\ell=1}^L \mathcal{N}\left(\mathbf{P}^{\sf{T}}\boldsymbol{\mu}_\ell, \hat{\mathbf{C}}\right),
\end{equation} where $\hat{\mathbf{C}}$  is the covariance matrix of $\hat{\mathbf{y}}$, given in \eqref{eq:covariance_matrix}.
    
\end{Lemma}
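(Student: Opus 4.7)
The plan is to propagate Gaussianity stage-by-stage through the AirBreath receiver pipeline culminating in \eqref{eq:input_LC}, using three standard facts: a Gaussian vector remains Gaussian under any linear map, an independent sum of Gaussians is Gaussian with covariance equal to the sum of covariances, and a GMM is preserved under any class-independent affine-plus-independent-Gaussian transformation.

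First, I would condition jointly on the class label $\ell$ and on the active set $\mathcal{K}$. Under \eqref{eq:data_dis}, given $\ell$ the sensor features $\{\mathbf{x}_k\}$ are mutually independent with $\mathbf{x}_k\sim\mathcal{N}(\boldsymbol{\mu}_\ell,\mathbf{C})$; since $\mathcal{K}$ is determined only by the channels $\{h_k\}$, it is independent of the data. The compression step $\hat{\mathbf{x}}_k=\mathbf{P}^{\sf T}\mathbf{x}_k$ then yields $\hat{\mathbf{x}}_k\sim\mathcal{N}(\mathbf{P}^{\sf T}\boldsymbol{\mu}_\ell,\mathbf{P}^{\sf T}\mathbf{C}\mathbf{P})$. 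The assumed rank condition $\text{rank}(\mathbf{P})=S$ guarantees that $\mathbf{P}^{\sf T}\mathbf{C}\mathbf{P}$ is positive definite whenever $\mathbf{C}\succ 0$, because $\mathbf{v}^{\sf T}\mathbf{P}^{\sf T}\mathbf{C}\mathbf{P}\mathbf{v}=(\mathbf{P}\mathbf{v})^{\sf T}\mathbf{C}(\mathbf{P}\mathbf{v})>0$ for every nonzero $\mathbf{v}\in\mathbb{R}^S$, so the conditional density is well defined on $\mathbb{R}^S$.

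Second, I would compute the fused-signal distribution $\frac{1}{|\mathcal{K}|}\sum_{k\in\mathcal{K}}\mathbf{P}^{\sf T}\mathbf{x}_k$ by averaging $|\mathcal{K}|$ i.i.d. Gaussians with common covariance $\mathbf{P}^{\sf T}\mathbf{C}\mathbf{P}$, producing a Gaussian with mean $\mathbf{P}^{\sf T}\boldsymbol{\mu}_\ell$ and covariance $\frac{1}{|\mathcal{K}|}\mathbf{P}^{\sf T}\mathbf{C}\mathbf{P}$. Adding the independent residual interference $\hat{\mathbf{z}}$ identified in \eqref{eq:input_LC} contributes a further $\frac{\sigma_{\sf nor}^2}{G|\mathcal{K}|^2\gamma_{\sf sen}}\mathbf{I}_S$ to the covariance, recovering exactly the $\hat{\mathbf{C}}$ in \eqref{eq:covariance_matrix}. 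Marginalizing the resulting class-conditional Gaussian density over the uniform class prior $1/L$ via the law of total probability then assembles the mixture representation \eqref{eq:dis_pruned_feature}.

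No step involves a genuine analytic obstacle; the only subtlety worth flagging is that $\mathcal{K}$ is random, so the statement is most naturally interpreted as a distribution conditioned on the active set, which is already how \eqref{eq:covariance_matrix} is written. A secondary bookkeeping issue is that $\hat{\mathbf{z}}$ is introduced as circularly symmetric complex while the signal mean $\mathbf{P}^{\sf T}\boldsymbol{\mu}_\ell$ is real; I would either work component-wise on the real part or, following standard AirComp practice, regard the inference-relevant dimension as the real projection so that the real-valued $\mathcal{N}(\cdot,\hat{\mathbf{C}})$ statement in \eqref{eq:dis_pruned_feature} is consistent throughout.
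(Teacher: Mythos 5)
Your proposal is correct. The paper itself states Lemma~\ref{lamma:compress_feature_dis} without an explicit proof (there is no corresponding appendix), treating it as an immediate consequence of the closure of Gaussians under linear maps and independent Gaussian addition; your stage-by-stage propagation through compression, averaging over $\mathcal{K}$, and addition of $\hat{\mathbf{z}}$ is exactly that standard argument, and it correctly reproduces $\hat{\mathbf{C}}$ in \eqref{eq:covariance_matrix}. Your two flagged subtleties --- that the statement is implicitly conditioned on the realized active set $\mathcal{K}$, and that the circularly symmetric complex $\hat{\mathbf{z}}$ must be read as its real projection for the real-valued $\mathcal{N}(\cdot,\hat{\mathbf{C}})$ to be consistent --- are both legitimate and are likewise left implicit by the paper.
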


Given the linear classifier's input, $\hat{\mathbf{y}}$, sensing accuracy is lower-bounded by the minimum DG as shown in Lemma \ref{Lemma:Sensing_ACC_LB}

\begin{Lemma}[Lower Bound of Sensing Accuracy~\cite{ZW2024ultra-LoLa}]
\label{Lemma:Sensing_ACC_LB}
The accuracy $A$ of a linear classifier solving \eqref{eq:LC_Maha_min_Y} satisfies
\begin{equation}
\label{eq:accuracy_LB}
A \geq 1-(L-1)Q\left(\frac{\sqrt{\mathcal{G}_{\min}}}{2}\right),
\end{equation}
where $\mathcal{G}_{\min}=\min_{\ell\neq\ell'}\mathcal{G}_{\ell,\ell'}(\hat{\mathbf{y}})$ denotes the minimum DG across all class pairs.
\end{Lemma}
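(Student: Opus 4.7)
The plan is to obtain the lower bound via a pairwise-error analysis combined with the union bound, exploiting Lemma~\ref{lamma:compress_feature_dis} so that the conditional distribution of $\hat{\mathbf{y}}$ given the true label $\ell$ is the Gaussian $\mathcal{N}(\mathbf{P}^{\sf T}\boldsymbol{\mu}_\ell,\hat{\mathbf{C}})$. Writing $A=\frac{1}{L}\sum_\ell \Pr(\hat{\ell}=\ell\mid\ell)=1-\frac{1}{L}\sum_\ell \Pr(\hat{\ell}\neq\ell\mid\ell)$, the first step will be to decompose the error event under class $\ell$ as $\{\hat{\ell}\neq\ell\mid\ell\}\subseteq\bigcup_{\ell'\neq\ell}\{z_{\ell'}(\hat{\mathbf{y}})\le z_\ell(\hat{\mathbf{y}})\mid\ell\}$ and apply the union bound. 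The task then reduces to bounding each pairwise misclassification probability $\Pr(z_{\ell'}(\hat{\mathbf{y}})\le z_{\ell}(\hat{\mathbf{y}})\mid\ell)$.

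Next, I would exploit the fact that although $z_\ell(\cdot)$ and $z_{\ell'}(\cdot)$ are quadratic in $\hat{\mathbf{y}}$, they share a common covariance $\hat{\mathbf{C}}$, so the difference is affine. Expanding, letting $\mathbf{m}_\ell:=\mathbf{P}^{\sf T}\boldsymbol{\mu}_\ell$ and $\mathbf{d}_{\ell\ell'}:=\mathbf{m}_{\ell'}-\mathbf{m}_\ell$, a short calculation gives
\begin{equation*}
z_{\ell'}(\hat{\mathbf{y}})-z_{\ell}(\hat{\mathbf{y}})=-2\bigl(\hat{\mathbf{y}}-\tfrac{1}{2}(\mathbf{m}_\ell+\mathbf{m}_{\ell'})\bigr)^{\sf T}\hat{\mathbf{C}}^{-1}\mathbf{d}_{\ell\ell'}.
\end{equation*}
Conditioned on class $\ell$, the random variable $U:=\mathbf{d}_{\ell\ell'}^{\sf T}\hat{\mathbf{C}}^{-1}(\hat{\mathbf{y}}-\mathbf{m}_\ell)$ is scalar Gaussian with zero mean and variance $\mathbf{d}_{\ell\ell'}^{\sf T}\hat{\mathbf{C}}^{-1}\mathbf{d}_{\ell\ell'}=\mathcal{G}_{\ell,\ell'}(\hat{\mathbf{y}})$, and the pairwise error event translates into $U\ge \tfrac{1}{2}\mathcal{G}_{\ell,\ell'}(\hat{\mathbf{y}})$. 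Standardising, this yields $\Pr(z_{\ell'}(\hat{\mathbf{y}})\le z_\ell(\hat{\mathbf{y}})\mid\ell)=Q\!\bigl(\tfrac{1}{2}\sqrt{\mathcal{G}_{\ell,\ell'}(\hat{\mathbf{y}})}\bigr)$.

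Finally, I would invoke the monotonicity of the $Q$-function together with $\mathcal{G}_{\ell,\ell'}(\hat{\mathbf{y}})\ge \mathcal{G}_{\min}$ for all $\ell\neq\ell'$ to obtain $Q\!\bigl(\tfrac{1}{2}\sqrt{\mathcal{G}_{\ell,\ell'}}\bigr)\le Q\!\bigl(\tfrac{1}{2}\sqrt{\mathcal{G}_{\min}}\bigr)$. Summing the $L-1$ pairwise terms and averaging over the $L$ equally-probable classes then collapses the bound into the advertised $A\ge 1-(L-1)Q\!\bigl(\tfrac{1}{2}\sqrt{\mathcal{G}_{\min}}\bigr)$.

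The only place that demands care is the algebraic reduction from the quadratic-form inequality on $\hat{\mathbf{y}}$ to a one-sided scalar Gaussian tail; the cross terms and the midpoint shift must be handled cleanly so that the variance of $U$ coincides exactly with $\mathcal{G}_{\ell,\ell'}(\hat{\mathbf{y}})$ rather than a scaled version, which is what ensures the factor $\tfrac{1}{2}$ inside $Q(\cdot)$. Beyond this identity, the remaining steps (union bound, monotonicity of $Q$, averaging) are routine and introduce no further obstacle.
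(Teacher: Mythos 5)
Your proposal is correct. The paper does not prove this lemma itself---it is imported from the cited reference \cite{ZW2024ultra-LoLa}---and your argument (union bound over pairwise error events, reduction of the Mahalanobis-distance comparison to an affine statistic $U$ with variance exactly $\mathcal{G}_{\ell,\ell'}(\hat{\mathbf{y}})$, then monotonicity of $Q$) is precisely the standard derivation underlying that result; the key identity $\Pr(z_{\ell'}\le z_\ell\mid\ell)=Q\bigl(\tfrac{1}{2}\sqrt{\mathcal{G}_{\ell,\ell'}}\bigr)$ checks out, given the Gaussian conditional law of $\hat{\mathbf{y}}$ from Lemma~\ref{lamma:compress_feature_dis}.
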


In \eqref{eq:accuracy_LB}, the lower bound of sensing accuracy is observed to be a monotone-increasing function of  $\mathcal{G}_{\min}$.
Since $\hat{\mathbf{y}}$ undergoes the AirBreath sensing transceiver, the minimum DG depends on the coupling effects of class pairs $(\ell_1,\ell_2)$, feature compression using $\mathbf{P}$ and spread spectrum with $G$, leading to the intractable analysis.
To simplify the analysis and keep this paper focused,  we investigate the effects of feature compression and spread spectrum on the DG between the closest class pair in the original feature space, given by
\begin{equation}
\label{eq:closet-class}
    (\ell,\ell')=\argmin_{\ell_1\neq\ell_2} (\boldsymbol{\mu}_{\ell_1}-\boldsymbol{\mu}_{\ell_2})^{\sf T} 
 \mathbf{C}^{-1}
 (\boldsymbol{\mu}_{\ell_1}-\boldsymbol{\mu}_{\ell_2}).
\end{equation}
In this context, we define the  DG  of $\hat{\mathbf{y}}$ between class $\ell$ and $\ell'$ as the surrogate of sensing accuracy, termed \emph{receive DG}, given in Definition \ref{def:receive DG}.

\begin{Definition}[Receive Discriminant Gain]
\label{def:receive DG}
The receive DG is defined as the DG between the closest class pair $(\ell,\ell')$ in \eqref{eq:closet-class}, given by
\begin{equation}
\label{eq:subspace-DG}
    \mathcal{G}(S,G)=\boldsymbol{\mu}_{\ell \ell'}^{\sf{T}}\mathbf{P}\hat{\mathbf{C}}^{-1}\mathbf{P}^{\sf T}\boldsymbol{\mu}_{\ell \ell'}.
\end{equation}
where  $\boldsymbol{\mu}_{\ell \ell'}=\boldsymbol{\mu}_{\ell}-\boldsymbol{\mu}_{\ell'}$ denotes the centroid difference between the closest classes $\ell$ and $\ell'$.

    
\end{Definition}

\subsection{Compression-spreading Tradeoff}

In this subsection, we consider a general compression matrix $\mathbf{P}\in \mathbb{R}^{D\times S}$, assuming only that it has full column rank $S$ without imposing any specific structural constraints.
Under this general setting, we characterize the effects of feature compression and spread spectrum on the receive DG, as detailed in Lemma \ref{Dim_Gain} and \ref{Proce_Gain}, respectively.

\begin{Lemma}[Effects of Feature Compression]
\label{Dim_Gain}
For fixed $G$, the receive DG monotonically increases with the subspace dimension $S$:
\begin{equation}
\mathcal{G}(S+1,G)\geq \mathcal{G}(S,G),\quad \forall S\in \{1,2,\cdots,D-1\}.
\end{equation}
\end{Lemma}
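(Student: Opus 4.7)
\medskip

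\noindent\textbf{Proof Plan.}
My plan is to prove the monotonicity by an explicit augmentation argument: take any valid $\mathbf{P}_S\in\mathbb{R}^{D\times S}$ of rank $S$ used to define $\mathcal{G}(S,G)$, augment it by one arbitrary additional column $\mathbf{p}\in\mathbb{R}^D$ to form $\mathbf{P}_{S+1}=[\mathbf{P}_S,\mathbf{p}]$, and then show directly that the associated receive DG cannot decrease. Since the lemma is a statement about $\mathcal{G}(S,G)$ (which in the paper is obtained by an importance-aware design that is optimal over the relevant choice of $\mathbf{P}$), monotonicity at the optimum follows from the fact that the $(S+1)$-column optimum dominates every augmentation of the $S$-column optimum.

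The core technical step is a Schur-complement computation. Writing $\alpha=\tfrac{1}{|\mathcal{K}|}$ and $\beta=\tfrac{\sigma_{\sf nor}^2}{G|\mathcal{K}|^2\gamma_{\sf sen}}$, the covariance in \eqref{eq:covariance_matrix} for the augmented projection has the block form
\begin{equation*}
\hat{\mathbf{C}}_{S+1}
=\begin{pmatrix}\hat{\mathbf{C}}_{S} & \mathbf{b}\\ \mathbf{b}^{\sf T} & c\end{pmatrix},
\quad
\mathbf{b}=\alpha\,\mathbf{P}_S^{\sf T}\mathbf{C}\mathbf{p},\quad
c=\alpha\,\mathbf{p}^{\sf T}\mathbf{C}\mathbf{p}+\beta.
\end{equation*}
Since $\hat{\mathbf{C}}_{S+1}$ is positive definite, its Schur complement $s=c-\mathbf{b}^{\sf T}\hat{\mathbf{C}}_S^{-1}\mathbf{b}>0$, and the standard block-inverse formula applies. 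Denoting $\mathbf{u}=\mathbf{P}_S^{\sf T}\boldsymbol{\mu}_{\ell\ell'}$ and $v=\mathbf{p}^{\sf T}\boldsymbol{\mu}_{\ell\ell'}$, plugging $\mathbf{P}_{S+1}^{\sf T}\boldsymbol{\mu}_{\ell\ell'}=(\mathbf{u}^{\sf T},v)^{\sf T}$ into \eqref{eq:subspace-DG} and expanding the quadratic form yields, after cancellation,
\begin{equation*}
\mathcal{G}(S+1,G)=\mathcal{G}(S,G)+\frac{1}{s}\bigl(v-\mathbf{b}^{\sf T}\hat{\mathbf{C}}_S^{-1}\mathbf{u}\bigr)^{2}.
\end{equation*}
The second term is non-negative, which proves $\mathcal{G}(S+1,G)\ge\mathcal{G}(S,G)$.

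The main obstacle I anticipate is conceptual rather than computational: making precise in what sense $\mathcal{G}(S,G)$ is a function of $S$ alone, since it also depends on the chosen compression matrix. I would resolve this by noting that throughout Sections \ref{sec:transceiver}--\ref{sec:funde-tradeoff} the matrix $\mathbf{P}$ is determined by the DG-maximising design, so $\mathcal{G}(S,G)$ denotes this optimised value; the augmentation argument above then shows that any optimal $\mathbf{P}_S$ can be extended to a feasible $\mathbf{P}_{S+1}$ with no loss, hence the optimum at dimension $S+1$ is at least that at $S$. The rest of the calculation is routine block-matrix algebra, and positive definiteness of $\hat{\mathbf{C}}_{S+1}$ (guaranteed by the additive term $\beta\mathbf{I}_{S+1}\succ\mathbf{0}$) ensures the Schur complement is strictly positive, so no boundary cases arise.
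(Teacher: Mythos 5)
Your proposal is correct and follows essentially the same route as the paper's proof: both augment $\mathbf{P}$ by one column, apply the $2\times 2$ block-inverse formula to the augmented covariance, and identify the DG increment as a non-negative rank-one quadratic form scaled by the inverse Schur complement. Your derivation of the Schur complement's positivity directly from positive definiteness of $\hat{\mathbf{C}}_{S+1}$ (owing to the $\beta\mathbf{I}$ term) is a cleaner shortcut than the paper's Sherman--Morrison--Woodbury and SVD computation, but the argument is the same in substance.
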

\begin{proof}
    (See Appendix~\ref{Proof_Dim_Gain}.)
\end{proof}

\begin{Lemma}[Effects of Spread Spectrum]
\label{Proce_Gain}
For a fixed compression matrix $\mathbf{P}$, receive DG monotonically increases with processing gain $G$:
\begin{equation}
\mathcal{G}(S,G+1)\geq \mathcal{G}(S,G),\quad \forall G\in \{1,2,\cdots,D-1\}.
\end{equation}
\end{Lemma}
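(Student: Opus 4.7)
The plan is to exploit the Löwner (positive semidefinite) partial order: monotonicity in $G$ of $\mathcal{G}(S,G)$ follows almost immediately once we observe that the processing gain enters $\hat{\mathbf{C}}$ only through the coefficient $1/G$ on an identity term, so increasing $G$ shrinks $\hat{\mathbf{C}}$ in the PSD sense, enlarges $\hat{\mathbf{C}}^{-1}$, and hence enlarges the quadratic form defining $\mathcal{G}(S,G)$.

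First I would make the dependence on $G$ explicit by writing, from \eqref{eq:covariance_matrix},
\begin{equation}
\hat{\mathbf{C}}(G)=\tfrac{1}{|\mathcal{K}|}\mathbf{P}^{\sf T}\mathbf{C}\mathbf{P}+\tfrac{\sigma^2_{\sf nor}}{G|\mathcal{K}|^2\gamma_{\sf sen}}\mathbf{I}_S,
\end{equation}
and note that $\hat{\mathbf{C}}(G)$ is positive definite for every $G\geq 1$, since $\mathbf{P}^{\sf T}\mathbf{C}\mathbf{P}\succeq 0$ (as $\mathbf{C}$ is a covariance matrix) and the added term is strictly positive definite. Hence $\hat{\mathbf{C}}(G)^{-1}$ exists.

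Next, forming the difference for $G\in\{1,\dots,D-1\}$,
\begin{equation}
\hat{\mathbf{C}}(G)-\hat{\mathbf{C}}(G+1)=\tfrac{\sigma^2_{\sf nor}}{|\mathcal{K}|^2\gamma_{\sf sen}}\!\left(\tfrac{1}{G}-\tfrac{1}{G+1}\right)\mathbf{I}_S\succeq\mathbf{0},
\end{equation}
so $\hat{\mathbf{C}}(G)\succeq\hat{\mathbf{C}}(G+1)$ in the Löwner order. Invoking the standard antitone property of matrix inversion on the cone of positive definite matrices, i.e.\ $\mathbf{A}\succeq\mathbf{B}\succ\mathbf{0}\Rightarrow\mathbf{B}^{-1}\succeq\mathbf{A}^{-1}$, I get $\hat{\mathbf{C}}(G+1)^{-1}\succeq\hat{\mathbf{C}}(G)^{-1}$. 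Consequently, for the fixed vector $\mathbf{v}=\mathbf{P}^{\sf T}\boldsymbol{\mu}_{\ell\ell'}\in\mathbb{R}^S$,
\begin{equation}
\mathbf{v}^{\sf T}\hat{\mathbf{C}}(G+1)^{-1}\mathbf{v}\;\geq\;\mathbf{v}^{\sf T}\hat{\mathbf{C}}(G)^{-1}\mathbf{v},
\end{equation}
which by Definition~\ref{def:receive DG} is exactly $\mathcal{G}(S,G+1)\geq\mathcal{G}(S,G)$.

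There is really no hard step here; the only thing to be careful about is positive-definiteness of $\hat{\mathbf{C}}(G)$ (needed to invoke the antitone-inversion property), which is handled above by observing that the identity perturbation is strictly positive. The argument is independent of the full column rank assumption on $\mathbf{P}$ used in Lemma~\ref{Dim_Gain}, so the result in fact holds for any fixed $\mathbf{P}$.
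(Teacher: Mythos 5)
Your proof is correct and follows essentially the same route as the paper: both establish $\hat{\mathbf{C}}(G)\succeq\hat{\mathbf{C}}(G+1)$ in the Löwner order and then invert to conclude the quadratic form increases. The only cosmetic difference is that the paper justifies the inverse ordering by noting the two matrices commute (they differ by a multiple of the identity), whereas you invoke the general antitone property of inversion on positive definite matrices, which is equally valid and slightly more general.
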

\begin{proof}
    (See Appendix~\ref{Proof_Proce_Gain}.)
\end{proof}

\begin{Remark}[Compression-spreading Tradeoff]
\label{Theorem:Tradeoff}
Under the system bandwidth constraint $S \times G \leq D$, a tradeoff between feature compression and spread spectrum arises, as revealed by their monotonic effects in  Lemmas \ref{Dim_Gain} and \ref{Proce_Gain}. 
In particular, increasing the feature dimension $S$ enhances the receive DG and sensing accuracy, while the reduced processing gain $G$ degrades the receive DG due to stronger interference and lower feature quality. 
\end{Remark}

\section{Optimal Control of AirBreath Sensing}
\label{sec:optimization}
In this section, we optimize the breathing depth, a key control variable of AirBreath sensing, by balancing the compression-spreading tradeoff mentioned in Remark \ref{Theorem:Tradeoff}.
To this end, the explicit expression of the receive DG is first derived, followed by quantifying the optimal breathing depth to maximize the DG.

\vspace{-5mm}
\subsection{Receive Discriminant Gain}

To tractably optimize the found tradeoff, this subsection derives the explicit expression of receive DG.
We consider an importance-aware compression scheme guided by \emph{linear discriminant analysis} (LDA), which projects feature vectors onto a subspace that maximizes inter-class variance relative to intra-class variance~\cite{Friedman-Book-2009}.
Let the symmetric and \emph{positive definite} (PD)  intra-class covariance matrix $\mathbf{C}$ be diagonalized via eigenvalue decomposition~\cite{abdi2010principal} as
\begin{equation}
    \mathbf{C}=\mathbf{U}\Sigma_C \mathbf{U}^{\sf T}.
\end{equation}
Here, $\mathbf{U}=[\mathbf{u}_1,\dots,\mathbf{u}_d,\dots,\mathbf{u}_D]$ is an orthogonal matrix whose columns are eigenvectors of $\mathbf{C}$.
$\Sigma_C$ is the diagonal matrix comprising the eigenvalues 
\begin{equation}
   \Sigma_C=\diag \{ \lambda_1, \dots,\lambda_d, \dots,\lambda_D\}.
\end{equation}
Projecting the centroid difference $\boldsymbol{\mu}_{\ell \ell'}$ onto the eigenspace yields 
$\hat{\boldsymbol{\mu}}_{\ell \ell'}= \mathbf{U}^{\sf T}\boldsymbol{\mu}_{\ell \ell'}$, where the $d$-th element  $\gamma_d=\hat{\boldsymbol{\mu}}_{\ell \ell'}(d)$ represents the  inter-class separation along the  $d$-th eigenvector. 
In such a manner, the DG of $d$-th eigenvectors, i.e., $\mathbf{u}_d$, is defined as 
\begin{equation}
W_d = \frac{\gamma_d^2}{\lambda_d}.
\end{equation}
We then consider the DG-based compression matrix, provided in Definition \ref{def:DG-matrix}. 

\begin{Definition}[DG-based Compression Matrix]
    \label{def:DG-matrix}
   Rank the eigenvectors $\{\mathbf{u}_d\}$ by decreasing $W_d$, i.e., $W_1\geq W_2\geq \dots \geq W_D$.
   The DG-based compression matrix $\mathbf{P}$ retains the top-$S$  eigenvectors with the highest DG, given as
\begin{equation}
\label{eq:eign_projection}
    \mathbf{P} = [ \mathbf{u}_1, \mathbf{u}_2,\dots, \mathbf{u}_S].
\end{equation}
\end{Definition}

With such a compression scheme, Proposition \ref{Lemma:DG_DG} provides the explicit expression of receive DG.

\begin{Proposition}[Receive DG]
\label{Lemma:DG_DG}
Consider feature compression using the matrix in  Definition \ref{def:DG-matrix},
the resultant receive DG, denoted as $\hat{\mathcal{G}}(S,G)$, is given by
\begin{equation}
\label{eq:expicit_DG}
\begin{split}
\hat{\mathcal{G}}(S,G)= |\mathcal{K}| \sum_{d=1}^S \frac{  W_d}{\hat{\sigma}^2  /(G\lambda_d)+1}
\end{split} 
\end{equation}
where  $\hat{\sigma}^2=\frac{\sigma^2_{\sf nor} }{|\mathcal{K}|\gamma_{\sf sen}}$ and $W_d$ is the DG of $\mathbf{u}_d$.

\end{Proposition}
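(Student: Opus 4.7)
The plan is to exploit the diagonalizing property of the DG-based compression matrix to reduce the matrix quadratic form in \eqref{eq:subspace-DG} to a scalar sum. The central observation is that, because the columns of $\mathbf{P}$ are (reordered) eigenvectors of $\mathbf{C}$, both the projected signal covariance $\mathbf{P}^{\sf T}\mathbf{C}\mathbf{P}$ and hence $\hat{\mathbf{C}}$ become diagonal, which makes the quadratic form immediate to evaluate term by term.

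First, I would invoke Definition \ref{def:DG-matrix} together with the orthonormality of the eigenvectors of $\mathbf{C}$ to compute $\mathbf{P}^{\sf T}\mathbf{C}\mathbf{P} = \diag(\lambda_1,\ldots,\lambda_S)$, where the indexing here follows the re-sorting of eigenpairs by decreasing $W_d$. Substituting this into \eqref{eq:covariance_matrix} yields a diagonal expression for $\hat{\mathbf{C}}$, namely $\hat{\mathbf{C}} = |\mathcal{K}|^{-1}\diag(\lambda_d + \hat{\sigma}^2/G)$ after absorbing the factor $\sigma^2_{\sf nor}/(|\mathcal{K}|\gamma_{\sf sen})$ into the shorthand $\hat{\sigma}^2$. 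The inverse is then immediate: $\hat{\mathbf{C}}^{-1} = |\mathcal{K}|\,\diag(1/(\lambda_d + \hat{\sigma}^2/G))$.

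Next, I would project the centroid difference onto the compression subspace to get $\mathbf{P}^{\sf T}\boldsymbol{\mu}_{\ell\ell'} = [\gamma_1,\ldots,\gamma_S]^{\sf T}$, using $\gamma_d = \mathbf{u}_d^{\sf T}\boldsymbol{\mu}_{\ell\ell'}$. Plugging this and $\hat{\mathbf{C}}^{-1}$ into \eqref{eq:subspace-DG} collapses the quadratic form into the single sum
\begin{equation*}
\mathcal{G}(S,G) = |\mathcal{K}|\sum_{d=1}^{S} \frac{\gamma_d^{\,2}}{\lambda_d + \hat{\sigma}^2/G}.
\end{equation*}
The closing step is to factor $\lambda_d$ out of each denominator and identify $W_d = \gamma_d^{\,2}/\lambda_d$, producing the claimed closed form in \eqref{eq:expicit_DG}.

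Since every manipulation is purely algebraic, there is no nontrivial analytic obstruction to overcome. The only point requiring care is index bookkeeping: the eigenpairs $\{(\lambda_d,\mathbf{u}_d)\}$ are re-sorted by decreasing $W_d$ rather than by decreasing $\lambda_d$, and this ordering has to be applied consistently so that the $\lambda_d$ appearing alongside each $W_d$ in the final sum is the eigenvalue paired with the eigenvector selected at that rank position.
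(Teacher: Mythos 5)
Your proposal is correct and follows essentially the same route as the paper's proof: both exploit the fact that the DG-based compression matrix consists of eigenvectors of $\mathbf{C}$, so that $\hat{\mathbf{C}}$ in \eqref{eq:covariance_matrix} becomes diagonal, reducing the quadratic form \eqref{eq:subspace-DG} to the term-by-term sum and then factoring $\lambda_d$ from each denominator to expose $W_d=\gamma_d^2/\lambda_d$. Your explicit remark on the re-sorting of eigenpairs by decreasing $W_d$ is a careful bookkeeping point that the paper leaves implicit, but it does not change the argument.
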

\begin{proof}
    (See Appendix~\ref{Proof_DG_DG}.)
\end{proof}

By projecting the feature vector onto 
the eigenspace of covariance matrix $\mathbf{C}$, Proposition \ref{Lemma:DG_DG} demonstrates that $\hat{\mathcal{G}}(S,G)$  is a monotone increasing function of feature dimension $S$ (i.e., select more eigenvectors) and processing gain $G$, respectively. Serving as the objective, the receive DG in \eqref{eq:expicit_DG} is maximized to derive the optimal solution in the next subsection.

\vspace{-3mm}

\subsection{Optimal Breathing Depth}

To maximize the sensing accuracy, this subsection optimizes the breathing depth, a key control variable that synchronizes the effects of feature compression and spread spectrum on sensing accuracy under bandwidth constraints.
First, the maximization of receive DG in \eqref{eq:expicit_DG} is  formulated as
\begin{subequations}
\label{prob:max_DG}
\begin{align}
        \max_{S,G} & \quad  \hat{\mathcal{G}}(S,G)\label{eq:opt_obj} \\  \mathrm{s.t.} & \quad   G\times S\leq D, \label{eq:e2e_latency}\\
            & \quad S, G\in \{1,,\dots, D\},
\end{align}
\end{subequations}
where \eqref{eq:e2e_latency} represents the constrained communication overhead resulting from limited bandwidth resources.
Deriving the optimal solution is challenging due to the general distribution of parameters $\{\gamma_s\}$ and $\{\lambda_s\}$ over feature dimensions.
For  tractability, we consider maximizing the lower bound of $ \hat{\mathcal{G}}(S,G)$, given by
\begin{equation}
\label{eq:phi_S_G}
\begin{split}
       \hat{\mathcal{G}}(S,G)
      & \geq \phi(S,G)=  \frac{|\mathcal{K}| \sum_{d=1}^S  W_d }{\hat{\sigma}^2  /(G \lambda_{\min})+1},\\
\end{split}
\end{equation}
where $\hat{\sigma}^2=\frac{\sigma^2_{\sf nor} }{|\mathcal{K}|\gamma_{\sf sen}}$, and $\lambda_{\min}=\min_{d\in \{1,2,\dots,D\}}\lambda_d$ is the minimum eigenvalue.

Next, we define the breathing depth of AirBreath sensing in Definition \ref{def:BD} to synchronize the two opposite effects within the constrained bandwidth in \eqref{eq:e2e_latency}.

\begin{Definition}[Breathing Depth] 
\label{def:BD}
In the considered ISEA systems with bandwidth and latency constraints, it is essential to maintain the total number of transmitted chips unchanged after spectrum breathing, such that $S \times G = D$ (i.e., equivalent to scenarios without spectrum breathing).
Under this constraint, the tradeoff between feature compression and spread spectrum is regulated by the feature subspace dimension $S=\frac{D}{G}$.
For clarity, $S$ is referred to as the breathing depth, which serves as the primary control variable in AirBreath sensing.
\end{Definition}

\begin{figure}[t!]
\centering
\subfigure[SIR $=-3$ dB]{
\includegraphics[width=0.45\columnwidth]{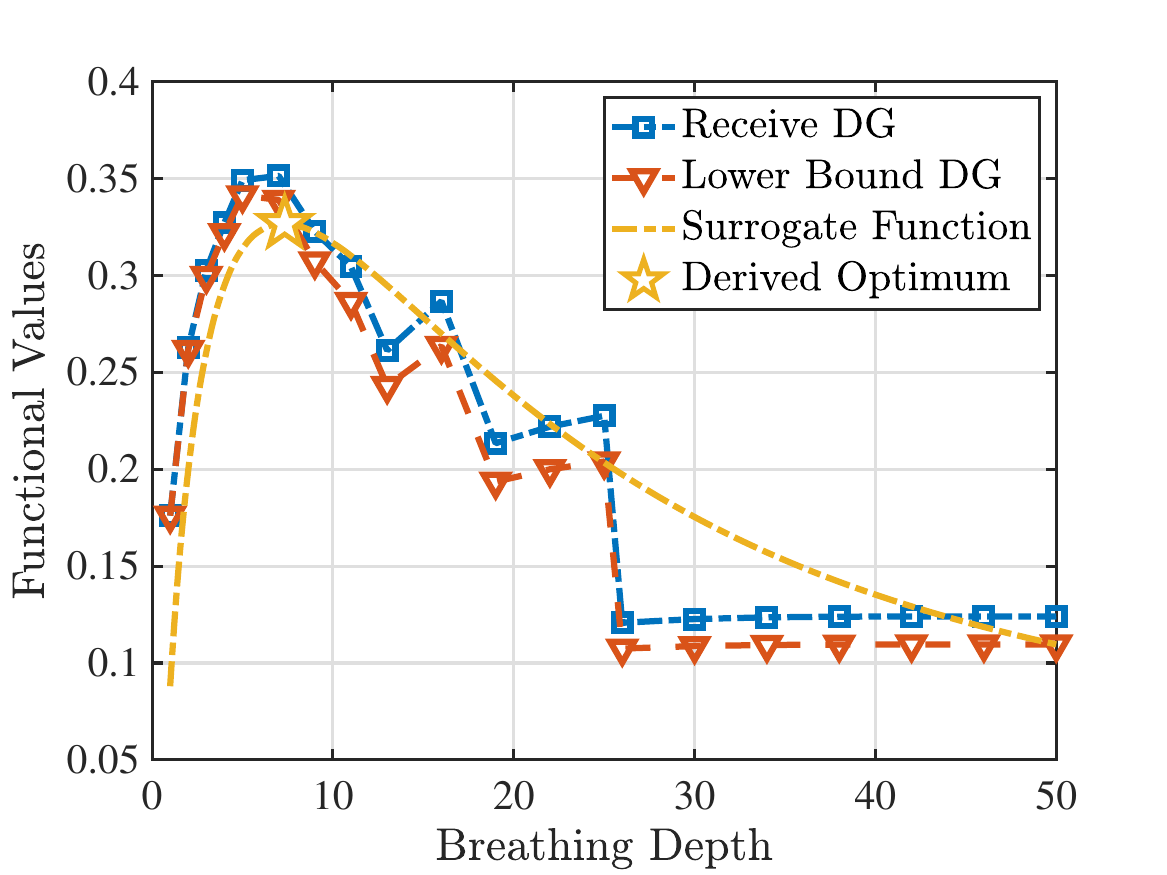}}
\subfigure[ SIR $=7$ dB]{
\includegraphics[width=0.45\columnwidth]{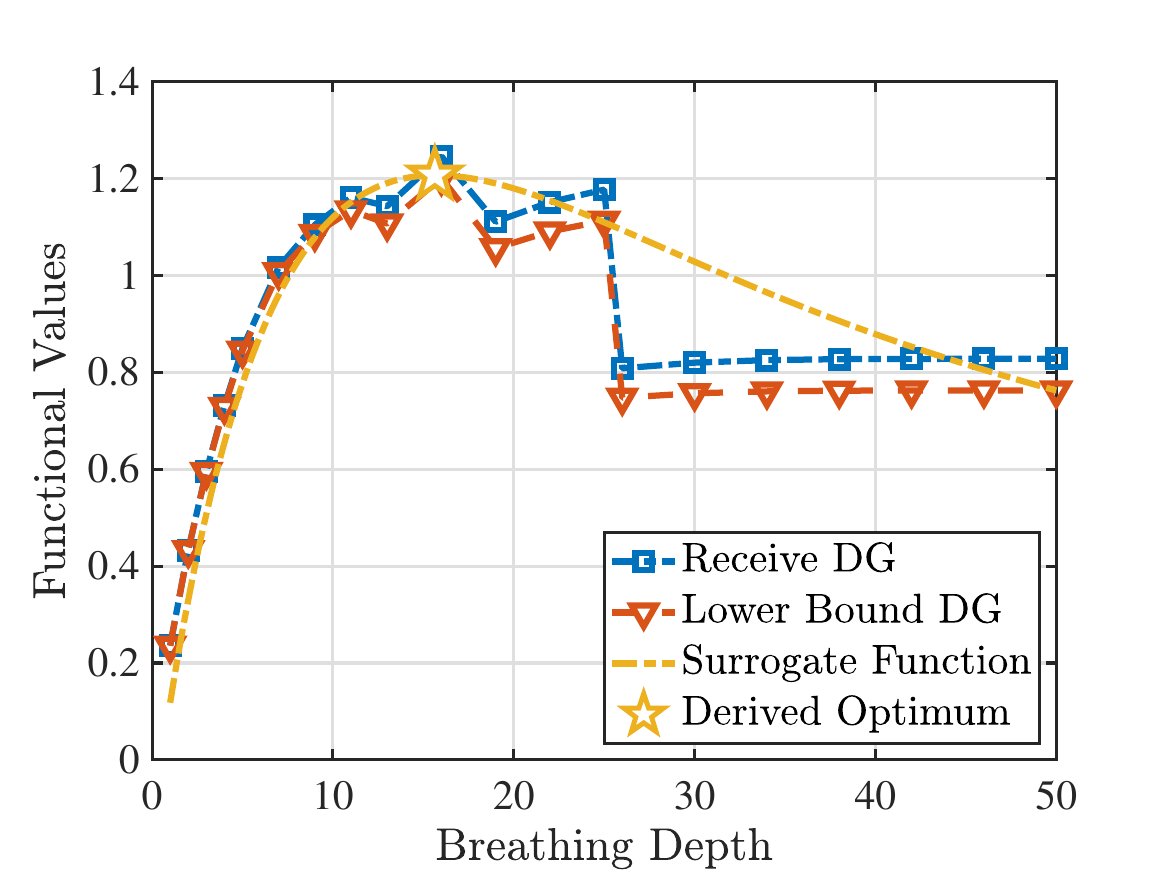}}
\caption{In the case of linear classification with constrained bandwidth, the comparison among receive DG in \eqref{eq:expicit_DG}, its lower bound  $\hat{\phi}(S)$, and the proposed surrogate $\tilde{\phi}(S)$.}
\label{fig:tradeoff}
\vspace{-5mm} 
\end{figure}

Replacing the $G$ in \eqref{eq:phi_S_G} with a function of breathing depth, i.e., $G_S=\left\lfloor\frac{D}{S}\right \rfloor$,  $\phi(S,G)$ becomes a discrete function of breathing depth $S\in\{1,2,\dots,D\}$, denoted as $\hat{\phi}(S)$, given by
\begin{equation}
\label{eq:hat{phi}(S)}
    \hat{\phi}(S)=\frac{|\mathcal{K}|\sum_{d=1}^{S}  W_d}{\hat{\sigma}^2 /(G_S \lambda_{\min})+1}.
\end{equation}
The optimization problem \eqref{prob:max_DG} is transformed into maximizing $ \hat{\phi}(S)$,  formulated by
\begin{subequations}
\label{prob:hat{phi}(S)}
\begin{align}
        \max_{\omega} & \quad      \hat{\phi}(S)  \\  \mathrm{s.t.}    
            & \quad S\in \{1,2,\dots,D\}.
\end{align}
\end{subequations}

To show the uniqueness of the optimal breathing depth in Problem \eqref{prob:hat{phi}(S)}, we approximate $\hat{\phi}(S)$ as a continuous function of $\omega$ 
 by considering two approximations.
 The one is to approximate maximum processing gain as  $G_S\approx \frac{D}{S}$ by allowing them to take the non-integer values.
The other is to approximate $\sum_{d=1}^{S}  W_d$ in \eqref{eq:hat{phi}(S)} as a continuous function of $S$, denoted as $\psi(S)$, given by~\cite{zw2025AIoutage}
\begin{equation}
    \sum_{d=1}^{S}  W_d\approx\int_{0}^{S} g(t) dt\triangleq\psi(S),
\end{equation}
where $g(t)$ is a continuous and differentiable function in the interval
 $t \in [0,S]$:
\begin{equation}
    g\left(t\right)=
        \frac{W_d-W_{d+1}}{2}\cos(\pi (t-d+1))+\frac{W_d+W_{d+1}}{2},
\end{equation}
otherwise, $g\left(t\right)=0, \forall t \notin [0,S]$.
Here, $W_d$ is the DG of the $d$-th feature dimension.
This leads to the approximation of \eqref{eq:hat{phi}(S)}, denoted as $\tilde{\phi}(S)\approx \hat{\phi}(S)$,  written as
\begin{equation}
\label{eq:LC_surrogate}
\tilde{\phi}(S) = \frac{|\mathcal{K}|\psi(S)}{\tilde{\sigma}^2  S+1}, \quad S\in[1,D]
\end{equation}
where $\tilde{\sigma}^2=\frac{\sigma^2_{\sf nor}}{D|\mathcal{K}|\lambda_{\min}\gamma_{\sf sen}}$.

 In Fig.~\ref{fig:tradeoff}, we compare the proposed surrogate function $\tilde{\phi}(S)$ with the original receive DG in \eqref{eq:expicit_DG} and its lower bound $\hat{\phi}(S)$ under a bandwidth constraint. The surrogate closely approximates its ground-truth counterparts and accurately captures the optimal points, exhibiting negligible approximation error. The compression–spreading tradeoff is also evident, as all three curves initially increase with breathing depth, reach a peak, and then decline. The observed fluctuations in the receive DG and its lower bound are attributed to the integer constraints on processing gain and breathing depth.

By replacing the objective with its approximation,  problem \eqref{prob:hat{phi}(S)} can be reformulated as 
\begin{subequations}
\label{prob:tilde{phi}(S)}
\begin{align}
        \max_{S} & \quad      \tilde{\phi}(S)  \\  \mathrm{s.t.}    
            & \quad S\in \{1,2,\dots, D\}.
\end{align}
\end{subequations}

Problem \eqref{prob:tilde{phi}(S)} has the unique optimal solution that maximizes the receive DG of the AirBreath sensing system, which is provided in Proposition \ref{prop:OPT_feature}.

\begin{Proposition}[Optimal Breathing Depth]
\label{prop:OPT_feature}
Let 
\begin{equation}
   \zeta(x) = \psi'(x)\left(\tilde{\sigma}^2 x + {1}\right) - \tilde{\sigma}^2 \psi(x).
\end{equation}
denote the function of $x\in[1,D]$. The optimal breathing depth that solves problem \eqref{prob:tilde{phi}(S)}, denoted as $S^*$, is then
\begin{equation}
\label{eq:S*}
    S^*= \left \lfloor x^* \right\rceil_{ \tilde{\phi}(\cdot)},
\end{equation}
where the rounding operator $\lfloor x \rceil_{{ \tilde{\phi}(\cdot)}}$ is equal to $\lfloor x \rfloor$ if $ \tilde{\phi}(\lfloor x \rfloor)\geq \tilde{\phi}(\lceil x \rceil) $, and is otherwise equal to $\lceil x \rceil$. 
The value $x^*$ is given by
\begin{equation}
x^*  = \left\{x| \zeta(x)=0, x\in [1,D]  \right\},
\end{equation}
if $\zeta(1)\cdot \zeta (D)<0$ holds; otherwise $S^*  = \argmax_{x\in \{1,D \}}\tilde{\phi}(x)$.

\end{Proposition}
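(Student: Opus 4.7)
The plan is to treat $\tilde{\phi}$ as a smooth function on the continuous interval $[1,D]$, establish that it is unimodal, identify its unique continuous maximizer via a first-order condition, and finally discretize by rounding to the better of the two neighboring integers.

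First, I would differentiate $\tilde{\phi}(x)=|\mathcal{K}|\psi(x)/(\tilde{\sigma}^2 x+1)$ using the quotient rule. Since $\psi$ is differentiable on $[1,D]$ with $\psi'(x)=g(x)$, a direct calculation gives
\begin{equation}
\tilde{\phi}'(x)=\frac{|\mathcal{K}|\bigl[\psi'(x)(\tilde{\sigma}^2 x+1)-\tilde{\sigma}^2\psi(x)\bigr]}{(\tilde{\sigma}^2 x+1)^2}=\frac{|\mathcal{K}|\,\zeta(x)}{(\tilde{\sigma}^2 x+1)^2}.
\end{equation}
Because the denominator is strictly positive, the sign of $\tilde{\phi}'(x)$ matches the sign of $\zeta(x)$, so every interior critical point of $\tilde{\phi}$ is a root of $\zeta$.

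The key step is to show that $\zeta$ has at most one sign change on $[1,D]$, which would identify the critical point with a global maximum. I would differentiate $\zeta$ once more and observe the cancellation
\begin{equation}
\zeta'(x)=\psi''(x)(\tilde{\sigma}^2 x+1)+\psi'(x)\tilde{\sigma}^2-\tilde{\sigma}^2\psi'(x)=\psi''(x)(\tilde{\sigma}^2 x+1).
\end{equation}
The factor $\tilde{\sigma}^2 x+1$ is positive on $[1,D]$, so the sign of $\zeta'$ is governed entirely by $\psi''(x)=g'(x)$. By construction, $g$ is a piecewise cosine interpolation of the sorted sequence $W_1\geq W_2\geq\cdots\geq W_D$; on each subinterval $[d-1,d]$ one computes $g'(t)=-\tfrac{\pi}{2}(W_d-W_{d+1})\sin(\pi(t-d+1))\le 0$, since $W_d-W_{d+1}\geq 0$ and $\sin(\pi(t-d+1))\ge 0$ for $t-d+1\in[0,1]$. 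Hence $\zeta'(x)\leq 0$ on $[1,D]$, so $\zeta$ is nonincreasing and crosses zero at most once. This proves $\tilde{\phi}$ is unimodal: it is nondecreasing while $\zeta\ge 0$ and nonincreasing after $\zeta$ becomes negative.

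Given this monotonicity structure, I would split into cases according to the boundary values of $\zeta$. If $\zeta(1)\cdot\zeta(D)<0$, the intermediate value theorem produces a unique root $x^*\in[1,D]$ of $\zeta$, and the unimodality argument above guarantees that $x^*$ is the global continuous maximizer of $\tilde{\phi}$. Otherwise $\zeta$ keeps a constant sign, so $\tilde{\phi}$ is monotone and its maximum on $[1,D]$ is attained at one of the endpoints, explaining the fallback $S^*=\argmax_{x\in\{1,D\}}\tilde{\phi}(x)$. Finally, to enforce the integer constraint $S\in\{1,\ldots,D\}$, I would exploit unimodality once more: the integer optimum must lie among $\{\lfloor x^*\rfloor,\lceil x^*\rceil\}$, and the rounding rule $\lfloor\cdot\rceil_{\tilde{\phi}(\cdot)}$ selects whichever of the two yields the larger $\tilde{\phi}$ value, which completes the derivation of \eqref{eq:S*}.

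The main obstacle I anticipate is the rigorous verification that $\psi''(x)\le 0$ globally, because $g$ is defined piecewise and one must ensure that no positive spike of $g'$ appears at the junction points $t=d$. I would handle this by checking continuity of $g$ at each integer (both one-sided limits equal $W_{d+1}$), by noting that $g'(d^-)=g'(d^+)=0$ since $\sin(\pi\cdot 1)=\sin(0)=0$, and by invoking monotone decrease of $W_d$ on every subinterval separately; the piecewise bounds then patch together to give a global inequality. Once this technical point is settled, the remaining steps are routine calculus.
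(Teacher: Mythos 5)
Your proposal is correct and follows essentially the same route as the paper's proof: the quotient-rule computation of $\tilde{\phi}'$, the reduction of its sign to that of $\zeta$, the observation that $\zeta'(x)=\psi''(x)(\tilde{\sigma}^2 x+1)\le 0$ so $\zeta$ is nonincreasing, and the case split on $\zeta(1)\zeta(D)$ followed by rounding. The only difference is that you explicitly verify $\psi''=g'\le 0$ piecewise (including the junction points), a detail the paper simply asserts.
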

\begin{proof}
    (See Appendix~\ref{proof_prop:OPT_feature}.)
\end{proof}

Fig. \ref{fig:channel_optimal} shows the dependence of the optimal breathing depth, computed using Proposition \ref{prop:OPT_feature}, on the channel state. The optimal breathing depth increases with higher received SIR or a larger number of active sensors, as both reduce interference power. Consequently, the optimized compression–spreading tradeoff favors a smaller processing gain (i.e., more feature dimensions) to enhance sensing accuracy. These results highlight the channel-adaptive capability of AirBreath sensing under dynamic wireless conditions.

\section{Extension to CNN Classification}
\label{sec:ext_CNN}

Leveraging insights from linear classification, this section derives an explainable surrogate function for the CNN classification and subsequently optimizes the breathing depth.

\begin{figure}[t!]
\centering
\subfigure[Receive SIR vs Optimal Breathing Depth]{
\includegraphics[width=0.45\columnwidth]{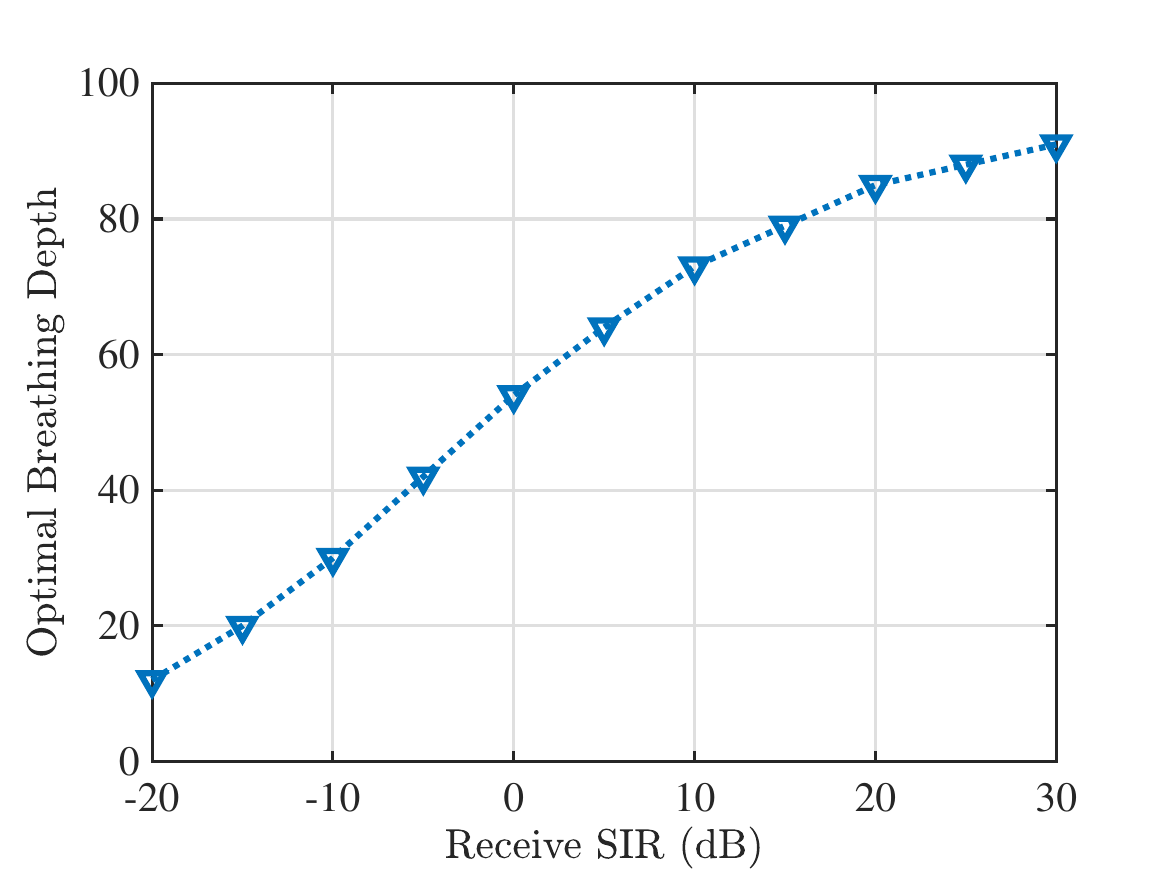}}
\subfigure[Active Sensors vs Optimal Breathing Depth]{
\includegraphics[width=0.45\columnwidth]{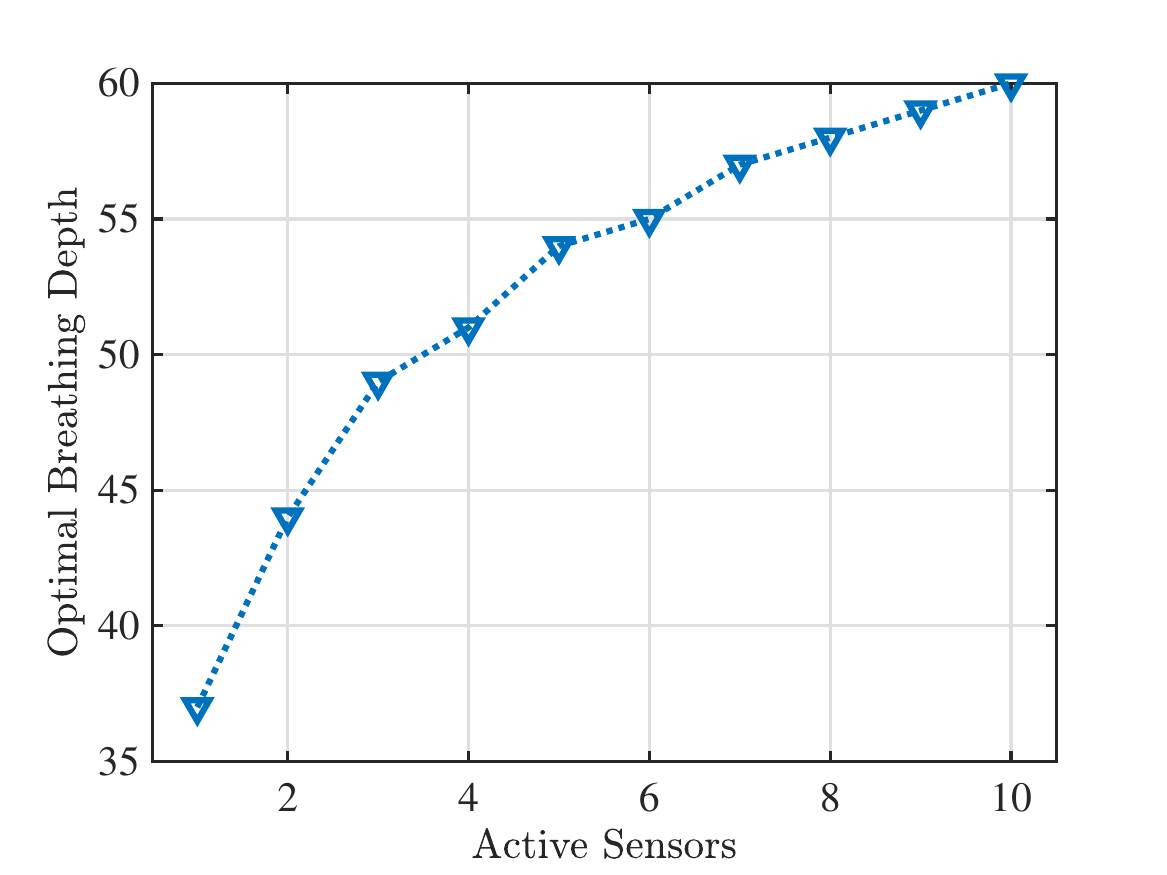}}
\caption{The effects of channel condition on the optimal breathing depth.}
\label{fig:channel_optimal}
\vspace{-5mm} 
\end{figure}

\vspace{-3mm}
\subsection{Surrogate of Sensing Accuracy}
Unlike linear classification, the nonlinearity of the CNN classifier complicates the modeling of sensing accuracy with respect to the received feature vector. 
To address this, we develop a CNN  surrogate function by leveraging the insights from the analysis on linear classification, given in Remark \ref{rem:Lesson}.

\begin{Remark}[Insight from Linear Classification]
\label{rem:Lesson}
The CNN performance surrogate is informed by two key insights from linear classification: 1) Sensing accuracy rises monotonically with both processing gain and feature dimension, as shown in Lemmas~\ref{Dim_Gain} and~\ref{Proce_Gain}; 
2) Mapping sensing accuracy to its DG domain enables a linear combination of compression and spreading effects on the incremental DG, denoted $\mathcal{G}_{\Delta}$,  computed by increasing breathing depth by one, mathematically,
\begin{equation}
\label{eq:incremental_DG}
\begin{split}
\mathcal{G}_{\Delta}=&~\mathcal{G}\left(S+1,G_{S+1}\right)-\mathcal{G}\left(S,G_S\right)\\
=&~\underbrace{\mathcal{G}\left(S+1,G_{S+1}\right)-\mathcal{G}\left(S,G_{S+1}\right)}_{\text{compression effect}}\\
&~+\underbrace{\mathcal{G}\left(S,G_{S+1}\right)-\mathcal{G}\left(S,G_S\right)}_{\text{spreading effect}},
\end{split}
\end{equation}
where $G_S = \left\lfloor\frac{D}{S}\right\rfloor$ and $G_{S+1} = \left\lfloor\frac{D}{S+1}\right\rfloor$ are the maximum processing gains under the bandwidth constraints.
\end{Remark}

Inspired by Remark \ref{rem:Lesson}, we consider the relationship between sensing accuracy and CNN DG given in Definition \ref{Def: CNN DG}.

\begin{Definition}[CNN Discriminant Gain~\cite{zw2025AIoutage}] \label{Def: CNN DG} Given the sensing accuracy of a CNN classifier, denoted by $A_{\sf cnn}$, the corresponding CNN DG, denoted by $\mathcal{G}_{\sf{cnn}}$, is defined as 
\begin{equation} \label{A2DG} \mathcal{G}_{\sf{cnn}} = \beta Q^{-1}\left(\alpha(1 - A_{\sf cnn})\right), \end{equation} where $\alpha$ and $\beta$ are hyperparameters. 
\end{Definition}

\begin{figure}[t!]
\centering
\subfigure[Effects of Spread Spectrum (SIR = $6.9$dB)]{\label{spread-effects}
\includegraphics[width=0.45\columnwidth]{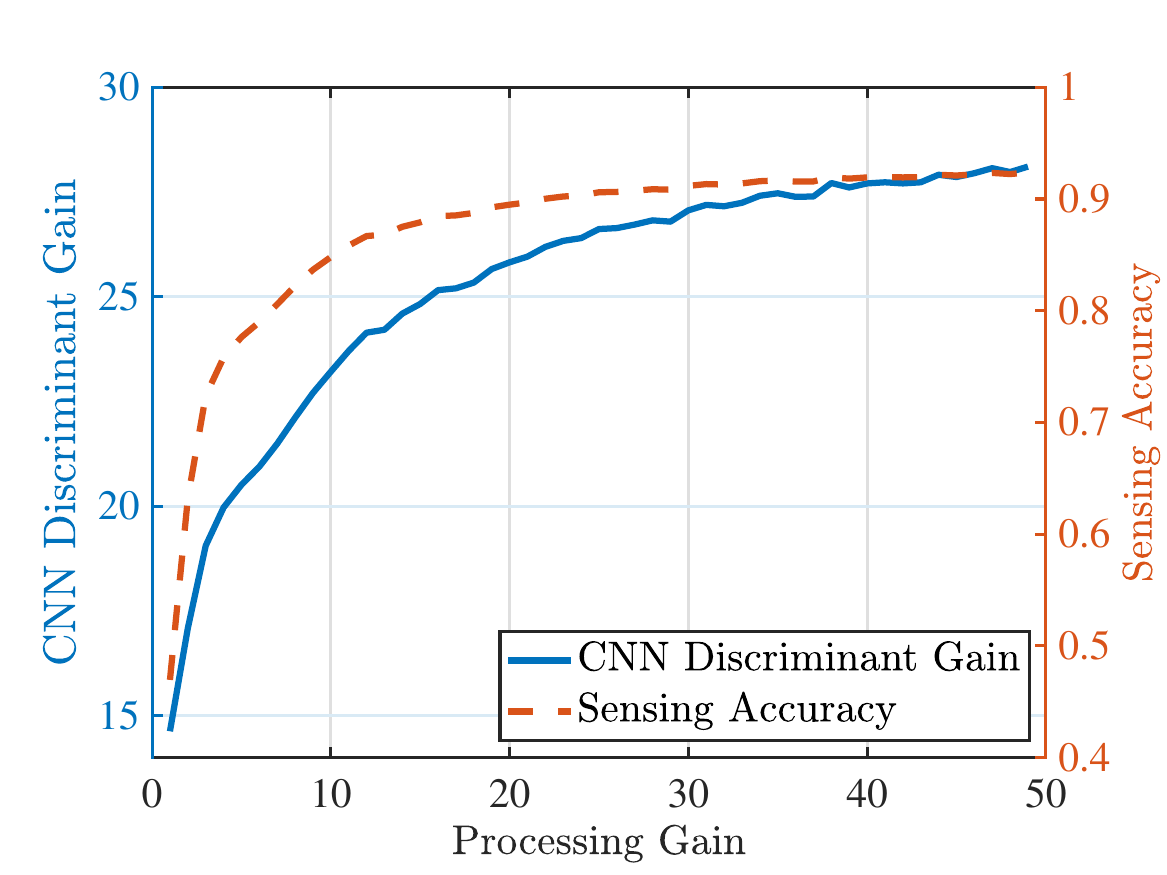}}
\subfigure[Effects of Feature Compression]{\label{compre-effects}
\includegraphics[width=0.45\columnwidth]{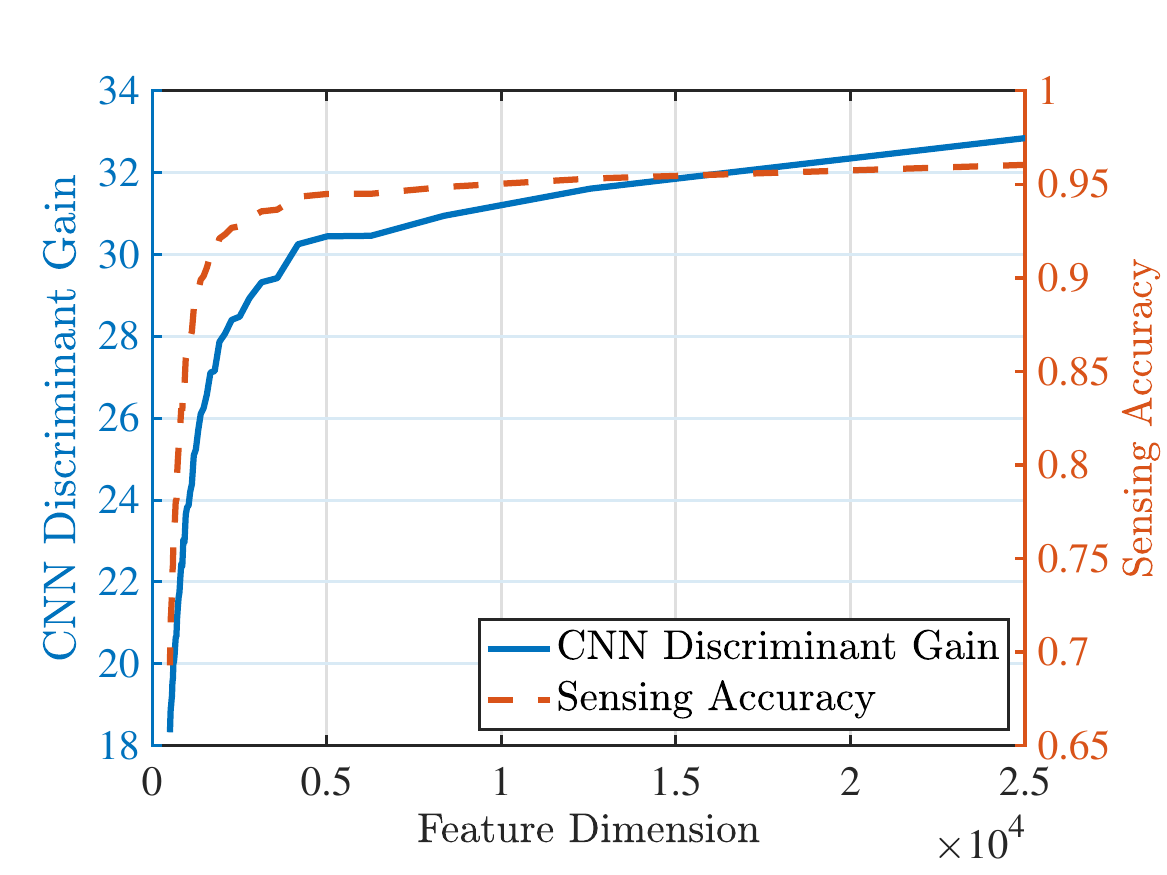}}
\caption{The estimated functions  $\mathcal{G}^{\sf comp}_{\sf cnn}(S)$ and $\mathcal{G}^{\sf spre}_{\sf cnn}(G)$ over feature dimension $S$ and processing gain $G$, respectively, on the training dataset. }
\label{fig:compression-spread-effects}
 \vspace{-5mm} 
\end{figure}

Leveraging Definition \ref{Def: CNN DG} and inspired by \eqref{eq:incremental_DG}, we propose a CNN surrogate function, denoted by $\hat{\mathcal{G}}_{\sf{cnn}}(S,G)$, to maximize sensing accuracy. This surrogate is expressed as the summation of two associated DGs, given by
\begin{equation}
\label{eq:cnn_surrog}
   \hat{\mathcal{G}}_{\sf{cnn}}(S,G) = \mathcal{G}^{\sf comp}_{\sf cnn}(S) + \mathcal{G}^{\sf spre}_{\sf cnn}(G).
\end{equation}
Here, $\mathcal{G}^{\sf spre}_{\sf cnn}(G)$ is a monotone-increasing function of processing gain $G$ as shown in Fig. \ref{spread-effects}, characterizing the effects of spread spectrum on CNN DG.
This function is obtained by Algorithm \ref{Alg}.
On the other hand, $\mathcal{G}^{\sf comp}_{\sf cnn}(S)$ quantifies the effect of feature compression on CNN DG, represented by a monotone-increasing function of $S$, as illustrated in Fig.~\ref{compre-effects}.
This function is obtained by evaluating the sensing accuracy and the associated CNN DG under an importance-aware feature compression scheme.
The importance of CNN features is measured statistically by their inter-class discriminability, as elaborated as follows.
Consider a training dataset comprising a set of object classes $\{\mathcal{C}_1, \dots, \mathcal{C}_\ell, \dots, \mathcal{C}_L\}$, where $\mathcal{C}_\ell$ denotes the set of objects belonging to class $\ell$.
Let $\mathbf{x}_{j,k}$ denote the feature tensor extracted from the $k$-th observation of the $j$-th object.
The statistical mean of class $\ell$ is computed as $\boldsymbol{\mu}_{{\ell}}^{\sf cnn}=\frac{1}{|\mathcal{C}_{\ell}|K}\sum_{j\in\mathcal{C}_{\ell}} \sum_{k=1}^K \mathbf{x}_{j,k}$.
With such statistics estimated on the training dataset, the  importance of the $d$-th feature dimension, denoted by $I_d$, is obtained offline, given by
\begin{equation}
\label{eq:feature_importance}
    I_d=\frac{2}{L(L-1)}\sum_{\ell_1=1}^L\sum_{\ell_2>\ell_1}\sqrt{(\mu^{\sf cnn}_{\ell_1}(d)-\mu^{\sf cnn}_{\ell_2}(d))^2}.
\end{equation}
Based on the element-wise feature importance, the sensing accuracy of CNN classification is computed by inputting the top-$S$ most important features into the classifier. The corresponding CNN DG, $\mathcal{G}^{\sf comp}_{\sf cnn}(S)$, is then obtained using \eqref{A2DG}.
 The compression-spreading tradeoff stems from the monotonic dependencies of the CNN DG on both feature compression and spread spectrum, as illustrated in Fig.~\ref{fig:compression-spread-effects}, meanwhile limited by the system bandwidth. This observation is consistent with the tradeoff described in Remark \ref{Theorem:Tradeoff} for  linear classification.

\begin{algorithm}[t]
 \caption{Effect of Spread Spectrum on CNN DG}
 \begin{algorithmic}[1]
 \renewcommand{\algorithmicrequire}{\textbf{Input:}}
  \REQUIRE  Number of observations $\mathcal{K}$, Receive SIR $\gamma_{\sf sen}$, and Processing gain $ G\in \{1,2, \dots, D\}$
\STATE{Initialisation}: Training datasets and well-trained model;
\FOR{Procesing gain: $G\in \{1,2, \dots, D\}$}
  \FOR{ Data samples in training dataset}
   \STATE Randomly select a batch of observations with size of $|\mathcal{K}|$ from the training dataset of a common object;
    \STATE   Extract feature vector $\{\mathbf{x}_1,\mathbf{x}_2,\dots,\mathbf{x}_{|\mathcal{K}|}\}$;
    \STATE Compute the fused feature vector: $\overline{\mathbf{x}}= \frac{1}{|\mathcal{K}|}\sum_{k=1}^{|\mathcal{K}|} \mathbf{x}_k$;
  \STATE Emulate the effects of channel-induced interference on fused feature vector: $ \tilde{\mathbf{x}} = \overline{\mathbf{x}}+ \mathbf{z}_{\sf cnn}$,
  where $\mathbf{z}_{\sf cnn}\sim \mathcal{N}(0,\frac{\sigma^2_{\sf nor}}{|\mathcal{K}|^2G\gamma_{\sf sen}} \mathbf{I}_D)$;
  \STATE Infer label using $\tilde{\mathbf{x}}$;
    \ENDFOR
    \STATE Compute the sensing accuracy $A_{\sf cnn}$;
    \STATE Compute the CNN DG $   \mathcal{G}^{\sf spre}_{\sf cnn}(G)$ using \eqref{A2DG};
\ENDFOR
  \RETURN $\mathcal{G}^{\sf spre}_{\sf cnn}(G)$;
 \end{algorithmic} 
 \label{Alg}
 
 \end{algorithm}

\vspace{-3mm}
\subsection{Optimal Breathing Depth for CNN Classification}

Building upon the CNN surrogate function presented in \eqref{eq:cnn_surrog}, the optimization problem for the CNN case  under a bandwidth constraint is formulated as follows:
\begin{subequations}
\label{prob:max_DG_cnn}
\begin{align}
        \max_{S,G} & \quad  \hat{\mathcal{G}}_{\sf cnn}(S,G)\label{eq:cnn_opt_obj} \\  \mathrm{s.t.} & \quad   G\times S\leq D, \label{eq:cnn_bandwith}\\
            & \quad S,G\in \{1,2,\dots, D\}.
\end{align}
\end{subequations}
Since the CNN surrogate increases with an increase in either feature dimensions $S$ or the processing gain $G$, the bandwidth constraint \eqref{eq:cnn_bandwith} should be held in equality. 
By substituting the processing gain with its maximum $G_S=\lfloor \frac{D}{S} \rfloor$, the optimization can be reformulated as the maximization of the univariate function $\hat{\mathcal{G}}_{\sf cnn}(S,G_S)$ of breathing depth $S$, where $G_S=\left\lfloor \frac{D}{S}\right\rfloor$.
Conditioned on the active sensor $\mathcal{K}$ and the receive SIR $\gamma_{\sf sen}$, the optimal breathing depth is given by
\begin{equation}
\label{eq:opt_dim_cnn}
    S^*=\argmax_{S\in\{1,2,\dots,D\}}\hat{\mathcal{G}}_{\sf cnn}(S,G_S).
\end{equation}
The optimal solution can be efficiently determined via a bisection search over $S\in \{1,2,\dots D\}$ that maximizes $\hat{\mathcal{G}}_{\sf cnn}(S,G_S) $, with the complexity of $\mathcal{O}(\log D)$.

\section{Experimental Results}
\label{sec:experiments}

\subsection{Experimental Settings}

Unless stated otherwise, the experiments are conducted under the following default settings.

\subsubsection{System Configuration}
We consider an AirComp-based multi-view sensing system comprising $K=10$ distributed sensors, collaboratively performing object recognition over $N$ inference rounds. 
Consider round $n\in\{1,2,\dots,N\}$,  the object is randomly selected from a predefined object set.
Each sensor extracts features and transmits them to the edge server over  Rayleigh fading channels, where the complex channel gain is modeled as $h_{k} \sim \mathcal{CN}(0,1)$ and varies independently across rounds. 
To cope with fading channels and realize amplitude alignment at edge server, truncated channel inversion is employed, with a threshold $h_{\sf th} = 0.1054$. This results in a sensor activation probability of $P_{\sf act} = 0.9$.
In round $n$, spread spectrum is performed using a PN sequence consisting of $G_n=\lfloor\frac{D}{S^*_n}\rfloor$ chips, where $S_n^*$ is the optimal breathing depth computed by Proposition \ref{prop:OPT_feature}  for linear classification and by \eqref{eq:opt_dim_cnn} for CNN classification.
Each chip is generated by the edge server using i.i.d. Bernoulli trials from the set $\{+1, -1\}$ and broadcast to distributed sensors. 
The chip duration is set to $T_C = 45.45$ ns, corresponding to a chip rate of 22 MHz, following the \emph{direct-sequence spread spectrum} (DSSS) standard in IEEE 802.11b~\cite{IEEE802.11b}.
The PN sequence is independently refreshed in each round for the concern of communication security.
The sensing accuracy is measured as the proportion of correctly classified instances relative to the total number of $N$ rounds.

\subsubsection{Classifier Settings}
Two classifiers and the corresponding datasets are detailed as follows.
\begin{itemize}
    \item \textbf{Linear Classification on Synthetic GMM Data:} 
The feature vector of linear classification is generated according to the GMM defined in \eqref{eq:data_dis}.
We consider a binary classification comprising two centroids with decreasing centroid difference over $D=50$ feature dimensions.
One of the  centroids is set as $\boldsymbol{\mu}_1=[1,(1-\frac{1}{D})^2,(1-\frac{2}{D})^2,\dots,\frac{1}{D^2}]$ and the other is $\boldsymbol{\mu}_2=-\boldsymbol{\mu}_1$. The covariance matrix is given by a diagonal matrix with increasing elements over dimension, given by  $\diag \{1+\frac{1}{D}, 1+\frac{2}{D},\dots,2  \}$.
 The covariance matrix is given by a diagonal matrix with increasing elements over dimensions, given by  $\diag \{1+\frac{1}{D},1+\frac{2}{D},\dots,2 \}$.
In this context, to select the Top-$S$ dimensions with the highest DGs, the compression matrix is set as $\mathbf{P}=[\mathbf{I}_S,\mathbf{0}_{(D-S)\times S}]^{\sf T}$ where $\mathbf{0}_{(D-S)\times S}$ denotes an all-zero matrix.
The inferred label is obtained by feeding the output of the AirBreath sensing into the classifier in \eqref{eq:LC_Maha_min_Y}.

 \item  \textbf{CNN Classification on ModelNet Dataset:}
In the case of the CNN classifier, we employ the widely used ModelNet dataset~\cite{ModelNet-Ref}, which provides multi-view object observations (e.g., a bed or a car), and implement the CNN architecture based on the VGG16 model~\cite{simonyan2015deep}. 
Following the approach in~\cite{Zhiyan-AirPooling}, the VGG16 model is partitioned into a feature extractor, executed on the device, and a classifier network, executed on the server. 
The resulting CNN architecture is trained with average pooling to recognize objects belonging to $L=20$ popular classes of ModelNet.
To perform feature extraction, the device randomly selects $K$ observations of the same class from the dataset and processes them through the feature extractor. 
The resulting feature tensor of each sensor has a dimension of $D=512\times 7 \times 7$ given the input image size of ModelNet of $244\times 3\times 3$.
Based on \eqref{eq:feature_importance}, the top $S^*$ features with the highest importance values are preserved.
Finally, the received feature tensor is reconstructed and passed to the server-side classifier to generate the inferred label.

\end{itemize}

\begin{figure}[t!]
\centering
\subfigure[Linear Classifier (SIR=$-23$ dB)]{\label{fig:lc_tradeoff_feature}
\includegraphics[width=0.45\columnwidth]{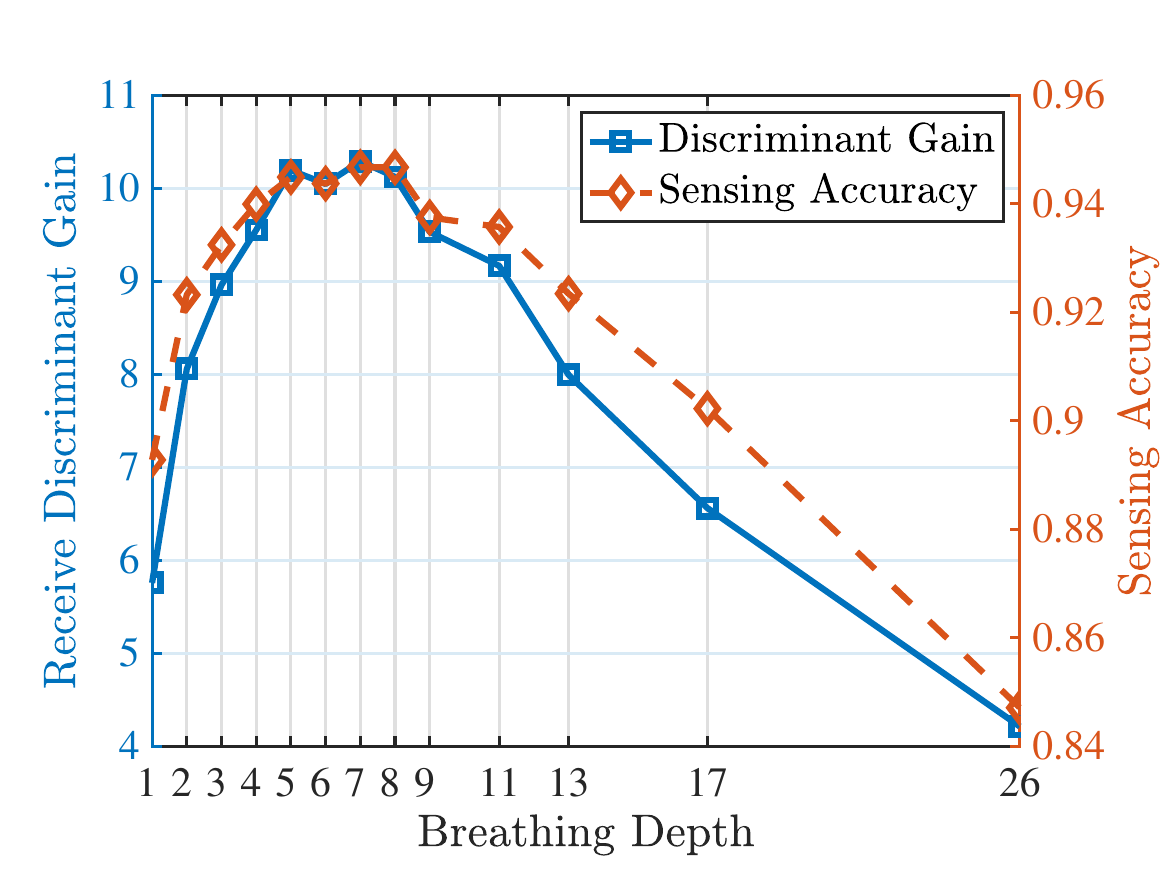}}
\subfigure[CNN Classifier (SIR=$6.9$ dB)]{\label{fig:cnn_tradeoff_fea}
\includegraphics[width=0.45\columnwidth]{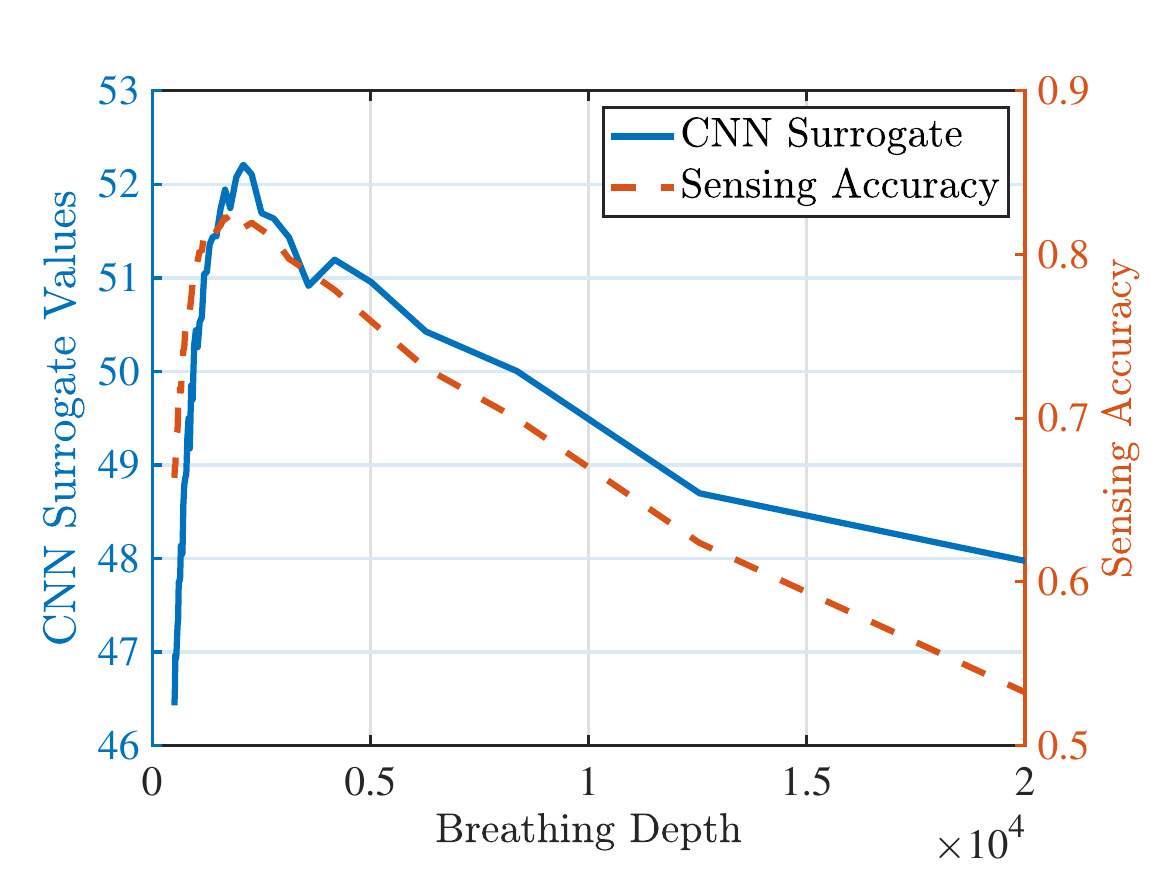}}
\caption{Spread-compression tradeoff under the constrained bandwidth.}
\label{fig:LC_DG_approx_func}
\vspace{-5mm} 
\end{figure}

\subsubsection{Benchmarking Schemes}
The performance of AirBreath sensing is evaluated through comparisons with three benchmark schemes, as described below.
\begin{itemize}
    \item \textbf{Brute-force:}
This scheme optimizes the breathing depth via a brute-force search to guarantee the optimality of the solution. Specifically, in round $n$, given the set of active sensors $\mathcal{K}_n$, the sensing accuracy on the test dataset is maximized by exhaustively searching over the feasible solution set $\{(S_n, G_n) \mid S_n G_n \leq D\}$.
The exhaustive search incurs a time complexity of $\mathcal{O}(D^2)$, rendering it impractical for realtime inference.
\item \textbf{Without Spectrum Breathing (No AirBreathing):} 
Without spectrum breathing, feature uploading is exposed to strong interference. 
In each round, the breathing depth is fixed at $S_n=D, G_n=1$, such that the full feature vector is transmitted to the edge server without compression. This is equivalent to setting the compression matrix as $\mathbf{P}=\mathbf{I}_D$.
\item \textbf{Fixed Breathing Depth (Fixed BD, $S_n=2, \frac{D}{2}$):} 
This scheme employs a fixed breathing depth across all inference rounds, without adapting to time-varying channels. i.e., $S_n=2, \frac{D}{2}, n = 1, \dots, N$.

\item \textbf{Random AirBreathing:}
Unlike the proposed AirBreath sensing with importance-driven feature compression, this scheme randomly prunes feature elements to allocate bandwidth for subsequent spread spectrum processing. 
The feature vector is compressed using a randomly generated selection matrix at each round, denoted by $\mathbf{P}\in \mathbb{R}^{D\times S}$, subject to the constraints $P_{i,j}\in{0,1}$, $\sum_i P_{i,j}\leq 1$, and $\sum_j P_{i,j}=1$. The optimal breathing depth is determined by exhaustively searching the feasible solution set to maximize the proposed surrogate functions for both linear and CNN classification.

\end{itemize}

\subsection{Compression-spreading Tradeoff}

In Fig. \ref{fig:LC_DG_approx_func}, 
we demonstrate the compression-spreading tradeoff controlled by the breathing depth for both linear and CNN classification, under the constrained system bandwidth.
The sensing performance is measured by two metrics, i.e., sensing accuracy and receive DG (CNN surrogate values).
It is observed that either the sensing accuracy or the surrogate value increases as the breathing depth increases until the maximum point.
This comes from a larger number of feature dimensions being received for inference.
After the maximum point, the sensing accuracy decreases for a continuously increasing breathing depth.
This is because the rise in breathing depth triggers a smaller processing gain and stronger interference, which reduces receive DG.
Additionally, the proposed surrogate function is seen to accurately mimic the monotonicity of sensing accuracy, especially the uniqueness of the optimal solution, which validates the Proposition \ref{prop:OPT_feature}.

\vspace{-3mm}
\subsection{Performance of AirBreath Sensing}

\begin{figure}[t!]
\centering
\subfigure[Linear Classification]{\label{fig:LC_SIR_ACC}
\includegraphics[width=0.45\columnwidth]{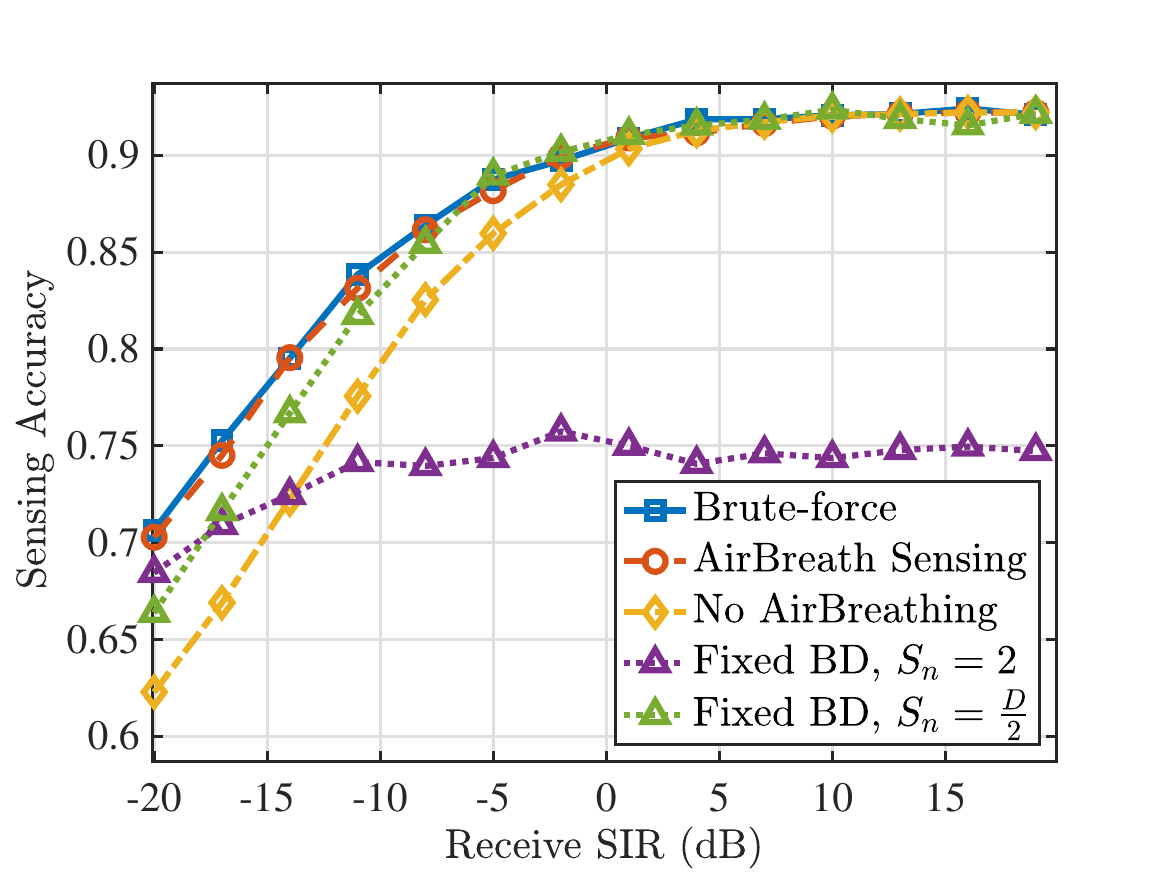}}
\subfigure[CNN Classification]{\label{CNN_SIR_ACC}
\includegraphics[width=0.45\columnwidth]{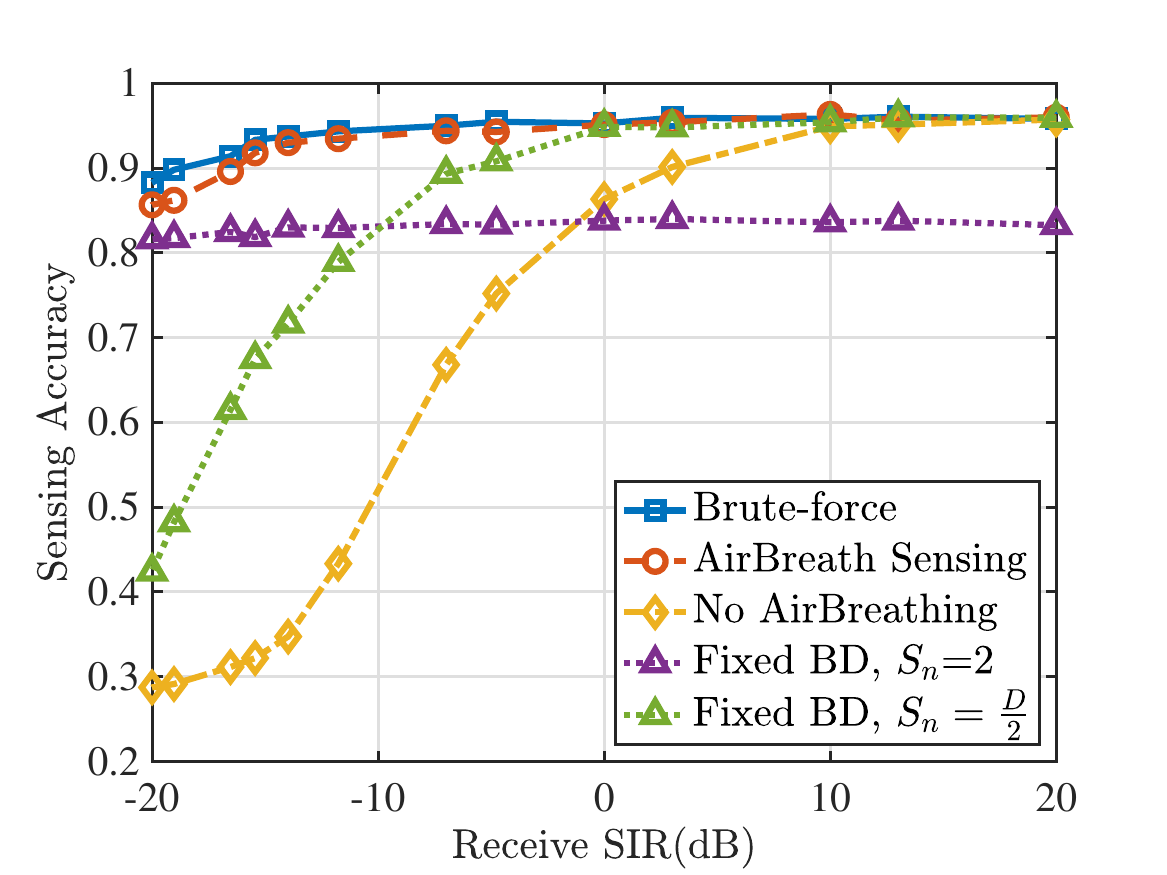}}
\caption{Performance comparison with benchmarks under different receive SIR
($K=8, P_{\sf act} = 0.5$ for linear classification and $K=9, P_{\sf act} = 0.8$ for CNN classification).}
\label{fig:SIR_ACC}
\vspace{-5mm} 
\end{figure}

\begin{figure}[t!]
\centering
\subfigure[Linear Classification]{
\includegraphics[width=0.45\columnwidth]{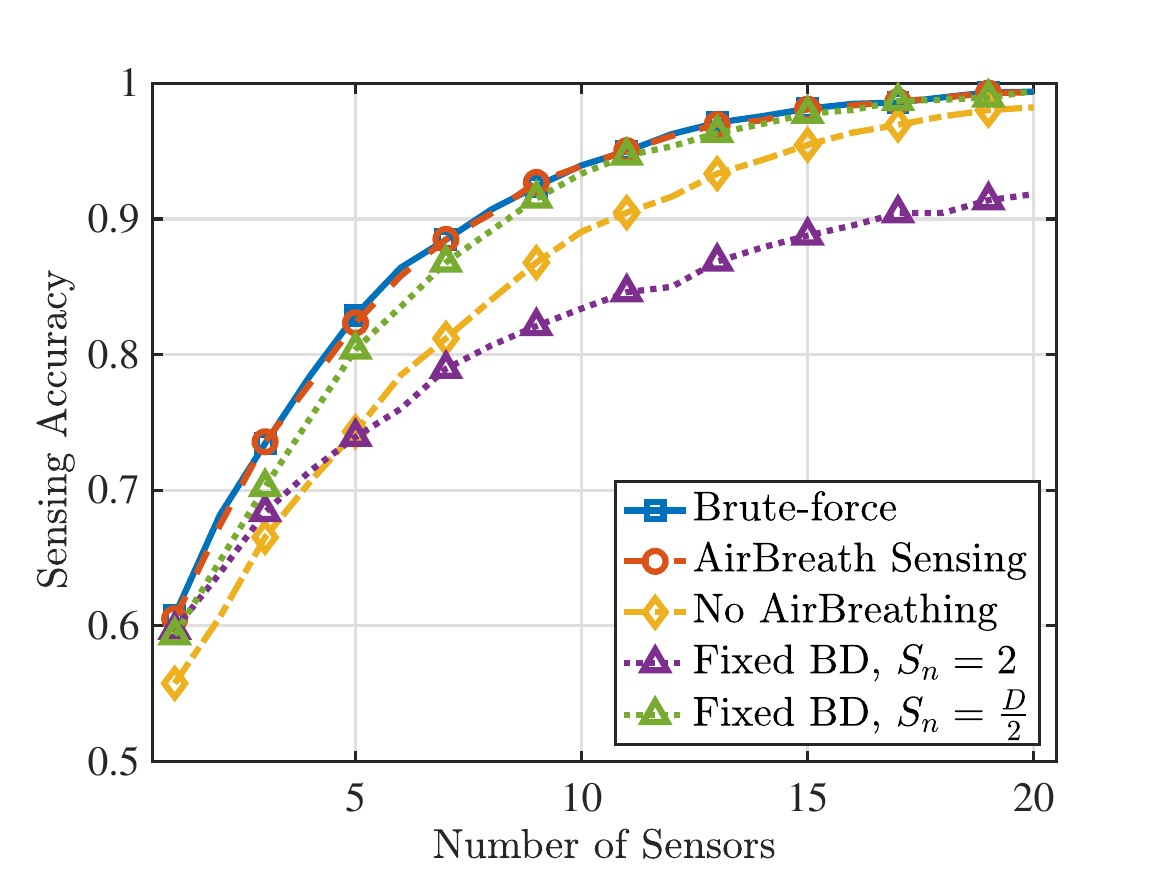}}
\subfigure[CNN Classification]{
\includegraphics[width=0.45\columnwidth]{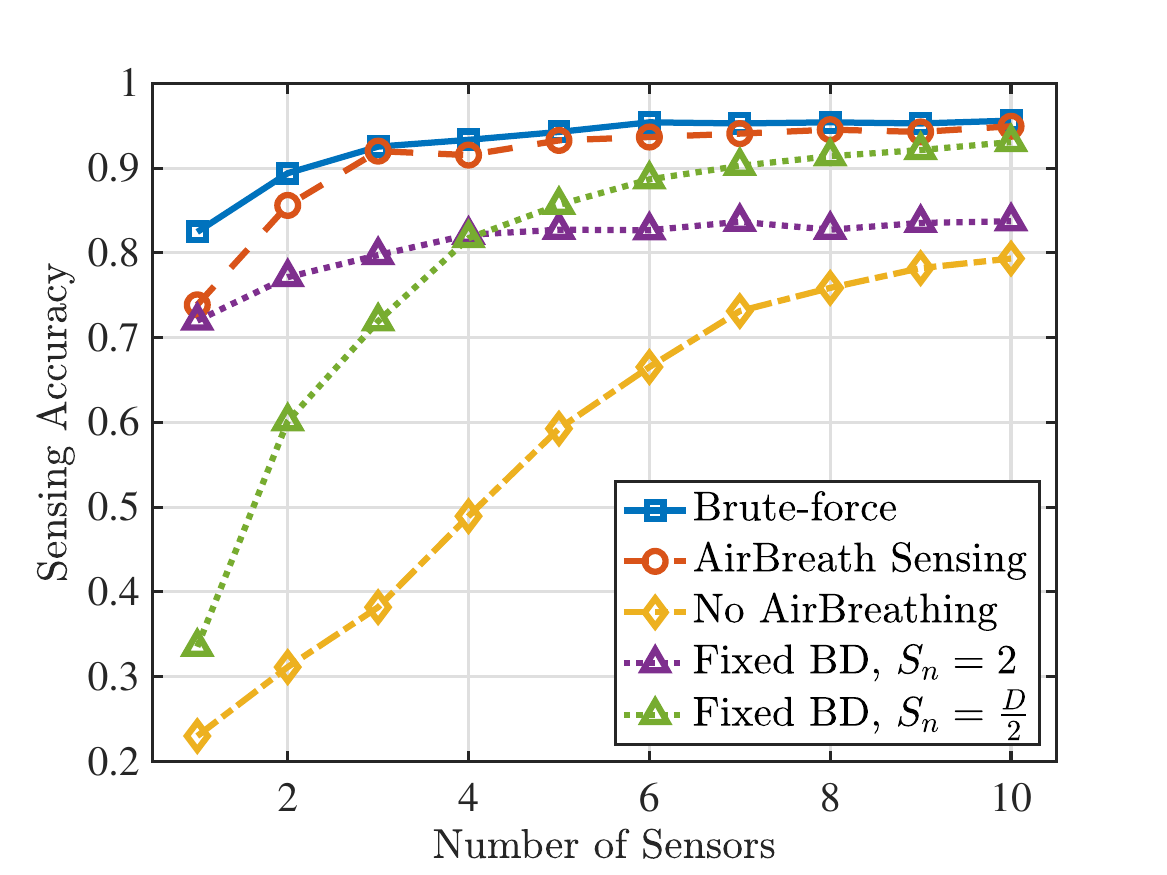}}
\caption{Performance comparison with benchmarks under different numbers of sensors
($P_{\sf act} = 0.9, \gamma_{\sf sen}=-14$ dB
 for linear classification and $P_{\sf act} = 0.9, \gamma_{\sf sen}=-4.8$ dB for CNN classification).
}
\label{fig:Sen_ACC}
\vspace{-5mm} 
\end{figure}

In this subsection, we evaluate the sensing accuracy of the proposed AirBreath sensing  by comparing it with the benchmarking schemes.
Fig. \ref{fig:SIR_ACC} demonstrates the performance comparisons with varying receive SIR for both linear and CNN classification.
The sensing accuracy increases with the received SIR and saturates for all four approaches (i.e., Brute-force, AirBreath Sensing, No AirBreathing, and Fixed BD with $S_n=\frac{D}{2}$) due to reduced channel interference. In contrast, Fixed BD with $S_n=2$ maintains a constant performance, limited by aggressive feature compression.
The proposed approach outperforms the method without spectrum breathing at low receive SIR, i.e., $\leq 5$ dB for linear classification and $\leq 10 $ dB for CNN classification, demonstrating robustness of AirBreath sensing to interference perturbation. 
This comes from the efficiency of spread spectrum in coping with interference.
The performance gap between them narrows to zero at high receive SIR.
Additionally, AirBreath sensing achieves higher sensing accuracy than the fixed breathing depth scheme in both classification cases. 
This improvement stems from its channel-adaptive breathing depth, which dynamically adjusts with the receive SIR and number of active sensors across rounds, as shown in Fig. \ref{fig:channel_optimal}.
The AirBreath sensing approaches the brute-force curve and realizes a close-to-optimal performance.
The optimality results from the accurate surrogate value in \eqref{eq:LC_surrogate} and \eqref{eq:cnn_surrog}, derived for linear and CNN classification, respectively.

\begin{figure}[t!]
\centering
\subfigure[Linear Classification]{
\includegraphics[width=0.45\columnwidth]{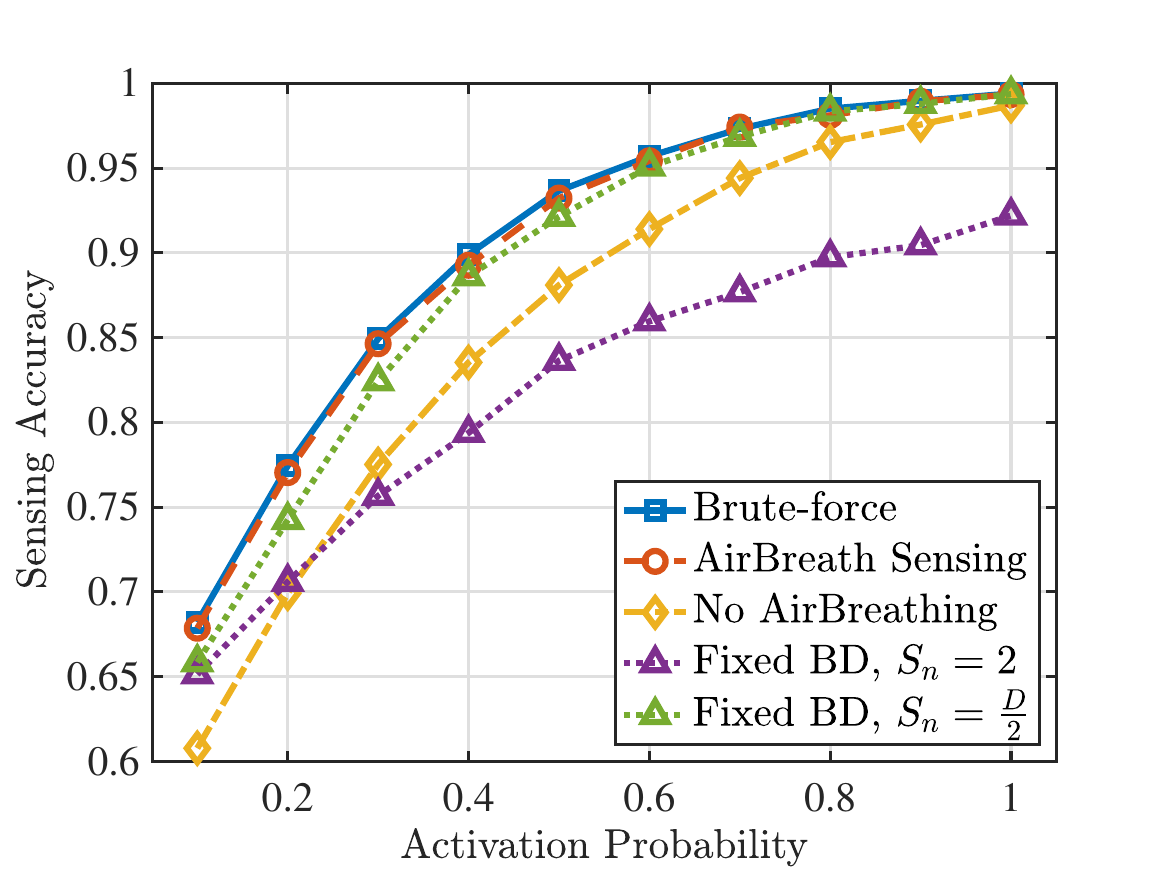}}
\subfigure[CNN Classification]{
\includegraphics[width=0.45\columnwidth]{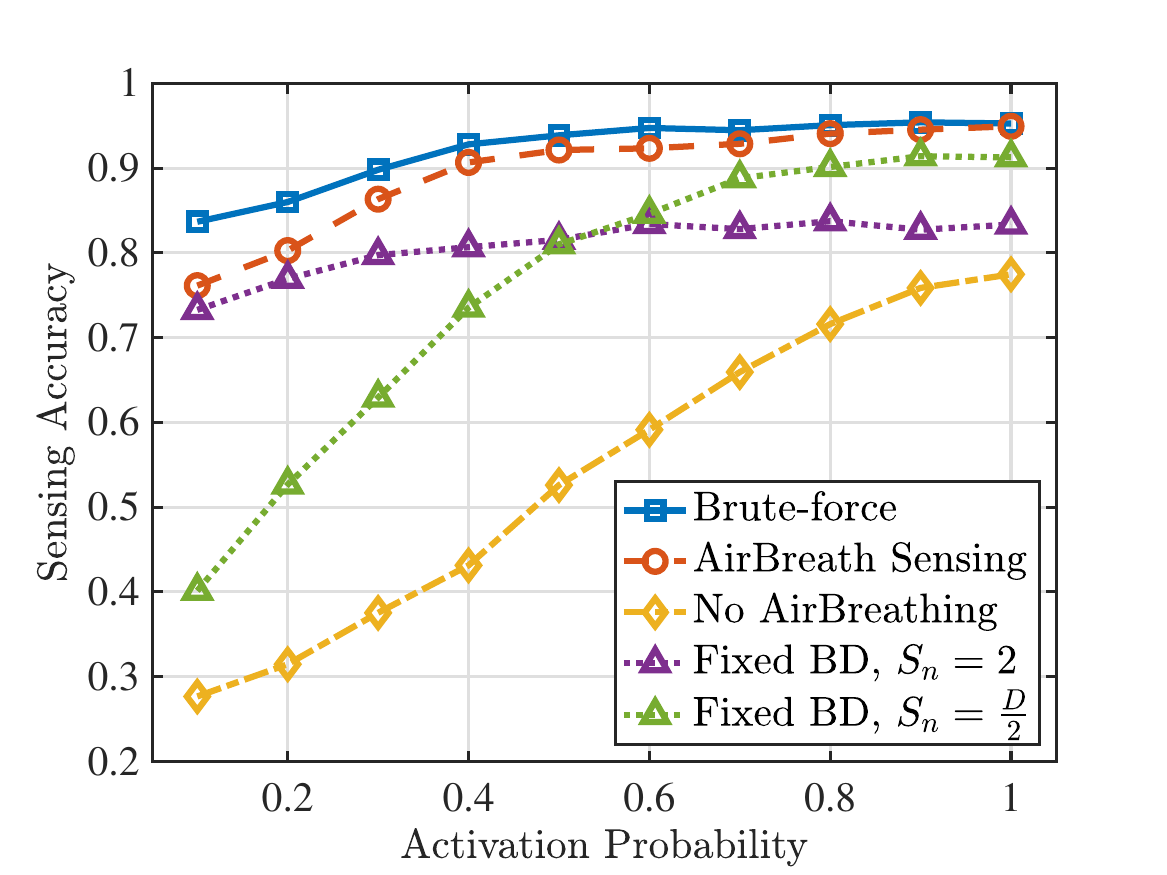}}
\caption{Performance comparison with benchmarks under different activation probabilities 
($K=18, \gamma_{\sf sen}=-14$ dB
 for linear classification and $K=8, \gamma_{\sf sen}=-4.8$ dB for CNN classification).}
\label{fig:Act_ACC}
\vspace{-3mm} 
\end{figure}

\begin{figure}[t!]
\centering
\subfigure[Linear Classification]{
\includegraphics[width=0.45\columnwidth]{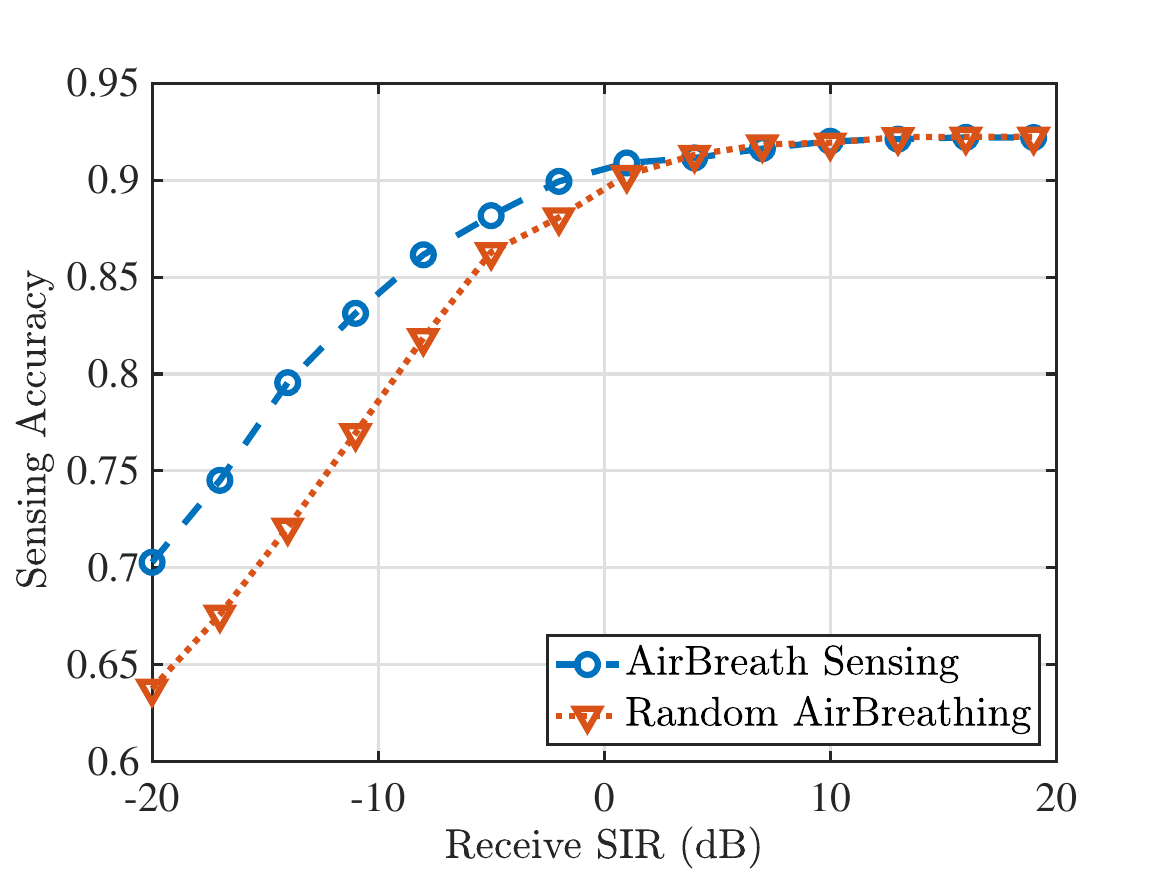}}
\subfigure[CNN Classification]{
\includegraphics[width=0.45\columnwidth]{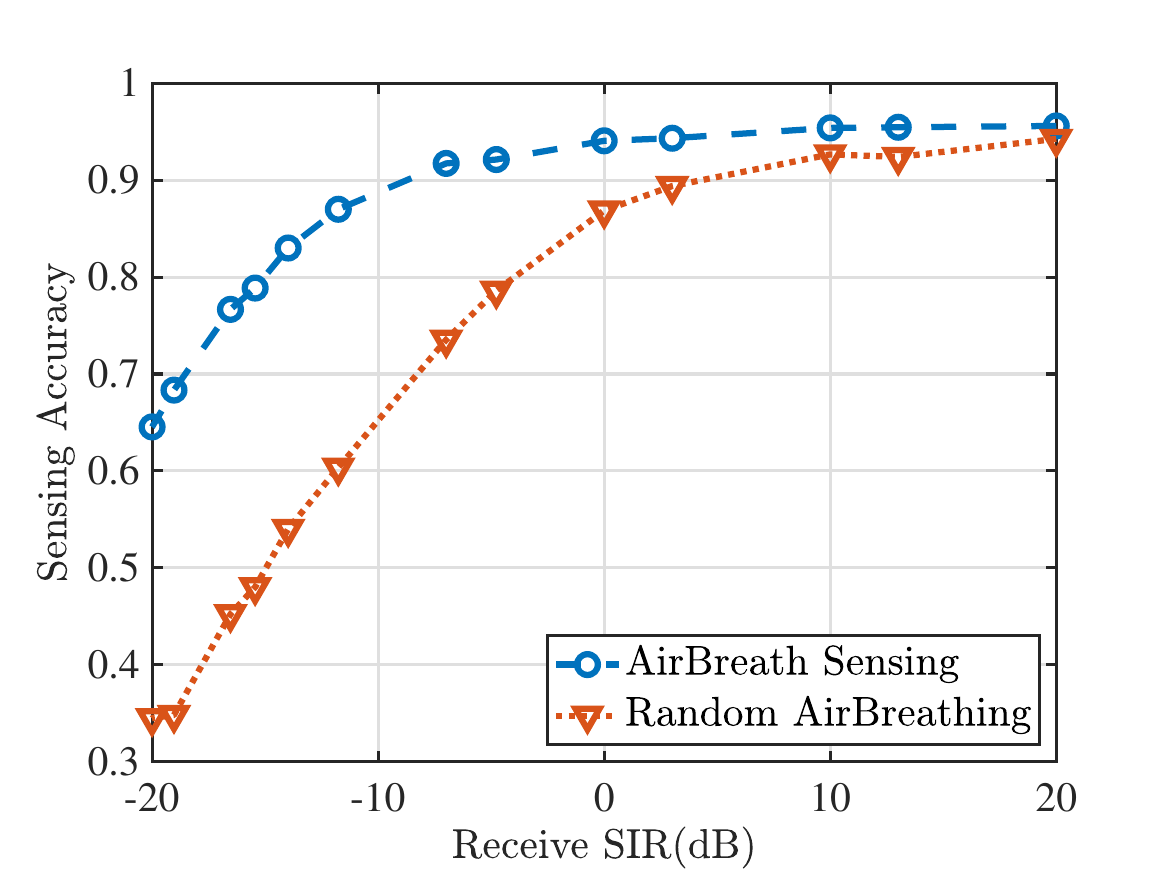}}
\caption{Performance comparison between AirBreath sensing with random feature compression
($K=8, P_{\sf act} = 0.5$).}
\label{fig:importance}
\vspace{-5mm} 
\end{figure}

In Fig. \ref{fig:Sen_ACC}, the comparisons are made over different numbers of sensors.
Increasing the sensor number enables more receive views for average pooling and an increased sensing accuracy.
In such a manner, the feature quality is enhanced by reducing the feature variance and induced channel interference in \eqref{eq:covariance_matrix}, leading to a higher DG in \eqref{eq:expicit_DG}.
The proposed approach is seen to outperform its counterpart benchmarks,  approaching the brute-force scheme as the sensor number increases.
The sensing performance is validated in Fig. \ref{fig:Act_ACC} for different settings of activation probability.
There is a monotone-increasing relation between sensing performance and activation probability. 
The reason is that higher activation probability results in more fused views for interference suppression so that the feature quality is enhanced.
In Fig. \ref{fig:importance}, AirBreath sensing outperforms its counterpart with random feature compression, especially at low receive SIR. This is because the proposed scheme prunes less important features, enabling higher compression efficiency and allowing a longer PN sequence for improved interference suppression.

\section{Conclusions}
\label{sec:conclusion}

In this work, we present a spectrum-efficient approach, called AirBreath sensing, to protect over-the-air distributed sensing from interference. 
The core design centers on controlling the level of feature compression and spread spectrum to adaptively balance the feature quantity–quality tradeoff under varying channel conditions and bandwidth constraints. Optimal breathing depth is derived for both linear and CNN classification to enhance sensing performance.
This work establishes a new design approach for robust ISEA, wherein feature sparsity is leveraged to enable efficient interference suppression via spread spectrum without incurring bandwidth expansion.

This work represents a new direction of robust over-the-air distributed sensing by integrating feature compression with interference suppression. It opens several promising avenues for future research. One direction involves multi-modal feature fusion by leveraging the orthogonal access of code division multiple access. Another is the design of AirBreath sensing under adversarial environments, including jamming detection and anti-jamming strategies.

\vspace{-2mm}

\appendix

 \begin{figure*}[t]
    \centering
    
\begin{equation}
\label{eq:B}
\begin{split}
 \mathbf{B}&=
 \begin{bmatrix}
    \mathbf{P} & \mathbf{p}_{S+1} 
   \end{bmatrix} 
   \begin{bmatrix}
       \mathbf{P}^{\sf{T}}\mathbf{C}\mathbf{P}+\beta \mathbf{I}_S &   \mathbf{P}^{\sf{T}}\mathbf{C}\mathbf{p}_{S+1}\\
      \mathbf{p}_{S+1}^{\sf{T}}\mathbf{C} \mathbf{P} & 
       \mathbf{p}_{S+1}^{\sf{T}}\mathbf{C}  \mathbf{p}_{S+1}+\beta
   \end{bmatrix}^{-1}
   \begin{bmatrix}
    \mathbf{P}^{\sf{T}} \\ \mathbf{p}_{S+1}^{\sf{T}} 
   \end{bmatrix} 
   -
   \begin{bmatrix}
    \mathbf{P} & \mathbf{p}_{S+1} 
   \end{bmatrix} 
   \begin{bmatrix}     \mathbf{P}^{\sf{T}}\mathbf{C}\mathbf{P}+\beta\mathbf{I}_S &   0\\
     0 &     0
   \end{bmatrix}^{-1}
   \begin{bmatrix}
    \mathbf{P}^{\sf{T}} \\ \mathbf{p}_{S+1}^{\sf{T}} 
   \end{bmatrix} \\
   &=  \begin{bmatrix}
    \mathbf{P} & \mathbf{p}_{S+1} 
   \end{bmatrix} 
   \underbrace{\left(
   \begin{bmatrix}
       \mathbf{P}^{\sf{T}}\mathbf{C}\mathbf{P}+\beta \mathbf{I}_S &   \mathbf{P}^{\sf{T}}\mathbf{C}\mathbf{p}_{S+1}\\
      \mathbf{p}_{S+1}^{\sf{T}}\mathbf{C} \mathbf{P} & 
       \mathbf{p}_{S+1}^{\sf{T}}\mathbf{C} \mathbf{p}_{S+1}+\beta
   \end{bmatrix}^{-1}
   -
   \begin{bmatrix}
       \mathbf{P}^{\sf{T}}\mathbf{C}\mathbf{P}+\beta\mathbf{I}_S &   0\\
     0 &     0
   \end{bmatrix}^{-1}
   \right)}_{\hat{\mathbf{B}}}
   \begin{bmatrix}
    \mathbf{P}^{\sf{T}} \\ \mathbf{p}_{S+1}^{\sf{T}} 
   \end{bmatrix},
   \end{split}
\end{equation}
   \hrulefill
\end{figure*}

\subsection{Proof of Lemma \ref{Dim_Gain}}
\label{Proof_Dim_Gain}

\begin{figure*}[t]
\centering

{\small
\begin{align}
\label{eq:B_2}
    \hat{\mathbf{B}} =&
    \begin{bmatrix}
       (\mathbf{X}^{\sf{T}} \mathbf{X}+\beta \mathbf{I}_S)^{-1} +  (\mathbf{X}^{\sf{T}} \mathbf{X}+\beta \mathbf{I}_S)^{-1}\mathbf{X}^{\sf{T}} \mathbf{y} z^{-1}   \mathbf{y}^{\sf{T}} \mathbf{X} (\mathbf{X}^{\sf{T}} \mathbf{X}+\beta \mathbf{I}_S)^{-1}&   -(\mathbf{X}^{\sf{T}} \mathbf{X}+\beta \mathbf{I}_S)^{-1}\mathbf{X}^{\sf{T}} \mathbf{y} z^{-1} \\
   - z^{-1}   \mathbf{y}^{\sf{T}} \mathbf{X} (\mathbf{X}^{\sf{T}} \mathbf{X}+\beta \mathbf{I}_S)^{-1} & 
     z^{-1}
   \end{bmatrix}
   -
   \begin{bmatrix}
       (\mathbf{X}^{\sf{T}} \mathbf{X}+\beta \mathbf{I}_S)^{-1} &   0\\
     0 &     0
   \end{bmatrix} \notag\\
     =&   \begin{bmatrix}
        (\mathbf{X}^{\sf{T}} \mathbf{X}+\beta \mathbf{I}_S)^{-1}\mathbf{X}^{\sf{T}} \mathbf{y} z^{-1}   \mathbf{y}^{\sf{T}} \mathbf{X} (\mathbf{X}^{\sf{T}} \mathbf{X}+\beta \mathbf{I}_S)^{-1}&   -(\mathbf{X}^{\sf{T}} \mathbf{X}+\beta \mathbf{I}_S)^{-1}\mathbf{X}^{\sf{T}} \mathbf{y} z^{-1} \\
   - z^{-1}   \mathbf{y}^{\sf{T}} \mathbf{X} (\mathbf{X}^{\sf{T}} \mathbf{X}+\beta \mathbf{I}_S)^{-1} & 
     z^{-1}
   \end{bmatrix}\notag\\
   =& \frac{1}{z} \underbrace{
   \begin{bmatrix}
        (\mathbf{X}^{\sf{T}} \mathbf{X}+\beta \mathbf{I}_S)^{-1}\mathbf{X}^{\sf{T}} \mathbf{y} \\ -1
   \end{bmatrix}
    \begin{bmatrix}
        ((\mathbf{X}^{\sf{T}} \mathbf{X}+\beta \mathbf{I}_S)^{-1}\mathbf{X}^{\sf{T}} \mathbf{y})^{\sf{T}} & -1
   \end{bmatrix}}_{> 0},
\end{align}
}
\hrulefill
\end{figure*}

\begin{figure*}[t]
\centering
\begin{equation}
\label{eq:block_mat_inverse}
\mathbf{A}^{-1}=\begin{bmatrix}
\mathbf{A}_1^{-1}+\mathbf{A}_1^{-1} \mathbf{A}_2\left(\mathbf{A}_4-\mathbf{A}_3 \mathbf{A}_1^{-1} \mathbf{A}_2\right)^{-1} \mathbf{A}_3 \mathbf{A}_1^{-1} & -\mathbf{A}_1^{-1} \mathbf{A}_2\left(\mathbf{A}_4-\mathbf{A}_3 \mathbf{A}_1^{-1} \mathbf{A}_2\right)^{-1} \\
-\left(\mathbf{A}_4-\mathbf{A}_3 \mathbf{A}_1^{-1} \mathbf{A}_2\right)^{-1} \mathbf{A}_3 \mathbf{A}_1^{-1} & \left(\mathbf{A}_4-\mathbf{A}_3 \mathbf{A}_1^{-1} \mathbf{A}_2\right)^{-1}
\end{bmatrix}
\end{equation}
\hrulefill
\end{figure*}


To show the monotone increase of receive DG over subspace dimensionality, we prove that $\forall \boldsymbol{\mu}_{\ell\ell'} \in \mathbb{R}^D $ and $\forall \mathbf{C} \in \mathbb{R}^{D\times D}$, the incremental DG over dimensionality is always positive.
Given the fixed processing gain $G$,
the incremental DG by increasing one feature dimension is 
\begin{equation}
    \mathcal{G}(S\!+\!1,G)\!-\! \mathcal{G}(S,G)
     \! =\!\boldsymbol{\mu}_{\ell\ell'}^{\sf{T}}\!
    \underbrace{ [\hat{\mathbf{P}}   \hat{\mathbf{C}}_{S+1}^{-1}\hat{\mathbf{P}}^{\sf{T}}\!\! - \!\mathbf{P}   \hat{\mathbf{C}}_S^{-1}\mathbf{P}^{\sf{T}} ]}_  {{\mathbf{B}}}\! \boldsymbol{\mu}_{\ell\ell'}\!,
\end{equation}
where $ \mathbf{P} = [\mathbf{p}_1, \mathbf{p}_2, \dots, \mathbf{p}_S] $ and  $ \hat{\mathbf{P}} = [\mathbf{P}, \mathbf{p}_{S+1}]$ are the projection matrices with dimensionalities of $S$ and $S+1$, respectively. $ \mathbf{p}_s \in \mathbb{R}^D, \forall s \in \{1,\dots, S+1\}$ is the $s$-th orthogonal basis of the subspace. 
This is equivalent to showing that $\mathbf{B}$ is a \emph{positive define} (PD) matrix given in \eqref{eq:B} (see the multi-line equation on the next page), where $\beta=\frac{\sigma^2_{\sf nor}}{|\mathcal{K}|^2G\gamma_{\sf sen}}$ is the scaling factor quantifying effects of the processing gain $G$ on induced interference.

To simplify the derivation, we define one matrix $\mathbf{X}=\mathbf{C}^{\frac{1}{2}}\mathbf{P}$ and vector $\mathbf{y}=\mathbf{C}^{\frac{1}{2}}\mathbf{p}_{S+1}$. Then $\hat{\mathbf{B}}$ in \eqref{eq:B} is simplified as
\begin{equation}
\small
\label{eq:B_1}
    \hat{{\mathbf{B}}} = \begin{bmatrix}
       \mathbf{X}^{\sf{T}} \mathbf{X}+\beta \mathbf{I}_S &    \mathbf{X}^{\sf{T}} \mathbf{y}\\
      \mathbf{y}^{\sf{T}} \mathbf{X} & 
       \mathbf{y}^{\sf{T}} \mathbf{y}+1
   \end{bmatrix}^{-1}
  \! -\!
   \begin{bmatrix}
       (\mathbf{X}^{\sf{T}} \mathbf{X}+\beta\mathbf{I}_S)^{-1} &   0\\
     0 &     0
   \end{bmatrix}.
\end{equation}
Note that the left term of \eqref{eq:B_1}  is the inverse of a $2\times 2$ block matrix.
Leveraging the block-wise  inverse of a matrix provided in  Lemma \ref{Lemma:mat-block-inverse}, $\hat{\mathbf{B}}$ can be decomposed as \eqref{eq:B_2},
where $z$ is a scalar given as
\begin{equation}
\label{eq:z_expression}
    z= \mathbf{y}^{\sf{T}} \mathbf{y}+1-  \mathbf{y}^{\sf{T}} \mathbf{X} (\mathbf{X}^{\sf{T}}\mathbf{X}+\beta \mathbf{I}_S)^{-1}\mathbf{X}^{\sf{T}} \mathbf{y}.
\end{equation}

\begin{Lemma}[Inverse on a $2\times 2$ block matrix~\cite{bhatia2013matrix}]
\label{Lemma:mat-block-inverse}
Let  $\mathbf{A}$ denote a $2\times 2$ block matrix:
\begin{equation}
\mathbf{A}=\begin{bmatrix}
\mathbf{A}_1 & \mathbf{A}_2 \\
\mathbf{A}_3 & \mathbf{A}_4
\end{bmatrix}.
\end{equation}
If $\mathbf{A}_1^{-1}$ or $\mathbf{A}_4^{-1}$ exist, matrix $\mathbf{A}$ can be inverted as \eqref{eq:block_mat_inverse}.
\end{Lemma}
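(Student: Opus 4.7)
The plan is to establish the block-inverse identity via a block LDU factorization of $\mathbf{A}$, invert each factor in closed form, and multiply them in reverse order. Assume first that $\mathbf{A}_1^{-1}$ exists; the case where only $\mathbf{A}_4^{-1}$ exists will follow by an identical argument with the roles of the diagonal blocks swapped.

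The first step is to construct the factorization
\begin{equation}
\mathbf{A}=
\begin{bmatrix} \mathbf{I} & \mathbf{0} \\ \mathbf{A}_3\mathbf{A}_1^{-1} & \mathbf{I}\end{bmatrix}
\begin{bmatrix} \mathbf{A}_1 & \mathbf{0} \\ \mathbf{0} & \mathbf{S}\end{bmatrix}
\begin{bmatrix} \mathbf{I} & \mathbf{A}_1^{-1}\mathbf{A}_2 \\ \mathbf{0} & \mathbf{I}\end{bmatrix},
\end{equation}
where $\mathbf{S}=\mathbf{A}_4-\mathbf{A}_3\mathbf{A}_1^{-1}\mathbf{A}_2$ is the Schur complement. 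Expanding the right-hand side by block multiplication reproduces $\mathbf{A}$, which I would verify in one or two lines. This factorization is the heart of the proof: it reduces the invertibility of $\mathbf{A}$ to that of the two diagonal blocks $\mathbf{A}_1$ and $\mathbf{S}$, and it makes the inverse computable because the outer factors are unit block-triangular.

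The second step is to invert the three factors individually. The unit block-triangular matrices are inverted by flipping the sign of their off-diagonal blocks, i.e.
\begin{equation}
\begin{bmatrix} \mathbf{I} & \mathbf{0} \\ \mathbf{A}_3\mathbf{A}_1^{-1} & \mathbf{I}\end{bmatrix}^{-1}
=\begin{bmatrix} \mathbf{I} & \mathbf{0} \\ -\mathbf{A}_3\mathbf{A}_1^{-1} & \mathbf{I}\end{bmatrix},
\qquad
\begin{bmatrix} \mathbf{I} & \mathbf{A}_1^{-1}\mathbf{A}_2 \\ \mathbf{0} & \mathbf{I}\end{bmatrix}^{-1}
=\begin{bmatrix} \mathbf{I} & -\mathbf{A}_1^{-1}\mathbf{A}_2 \\ \mathbf{0} & \mathbf{I}\end{bmatrix},
\end{equation}
while the block-diagonal middle factor inverts entrywise to $\mathrm{diag}(\mathbf{A}_1^{-1},\mathbf{S}^{-1})$. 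Then $\mathbf{A}^{-1}$ is obtained by multiplying the three inverses in the reversed order. Carrying out the two block products yields exactly the expression stated in~\eqref{eq:block_mat_inverse}, which is essentially routine bookkeeping.

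An alternative (and shorter) route is direct verification: propose the right-hand side of~\eqref{eq:block_mat_inverse} as a candidate inverse and check by block multiplication that its product with $\mathbf{A}$ equals $\mathbf{I}_2$ in block form, using only $\mathbf{S}\mathbf{S}^{-1}=\mathbf{I}$ and $\mathbf{A}_1\mathbf{A}_1^{-1}=\mathbf{I}$. I would include this as a self-contained check. The only real subtlety is justifying that $\mathbf{S}$ is invertible when $\mathbf{A}_1^{-1}$ exists: this follows from taking determinants in the factorization, which gives $\det\mathbf{A}=\det\mathbf{A}_1\cdot\det\mathbf{S}$, so the assumed invertibility of $\mathbf{A}$ forces $\mathbf{S}$ to be invertible. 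For the dual case where $\mathbf{A}_4^{-1}$ exists, one repeats the construction with the factorization anchored on $\mathbf{A}_4$, obtaining the symmetric formula that uses the Schur complement $\mathbf{A}_1-\mathbf{A}_2\mathbf{A}_4^{-1}\mathbf{A}_3$; the equivalence of the two closed forms is a consequence of the Woodbury identity and need not be re-derived. The principal obstacle, if any, is purely notational care in the block multiplications rather than a conceptual hurdle.
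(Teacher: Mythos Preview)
Your proof is correct and follows the standard block-LDU (Schur complement) derivation. However, the paper does not actually prove this lemma: it is stated as a known result with a citation to~\cite{bhatia2013matrix} and used as a tool inside the proof of Lemma~\ref{Dim_Gain}. So there is no ``paper's own proof'' to compare against here; your argument simply supplies a self-contained justification where the paper defers to the literature. One small remark: the lemma as stated only assumes $\mathbf{A}_1^{-1}$ (or $\mathbf{A}_4^{-1}$) exists, yet the formula in~\eqref{eq:block_mat_inverse} also requires the Schur complement $\mathbf{S}=\mathbf{A}_4-\mathbf{A}_3\mathbf{A}_1^{-1}\mathbf{A}_2$ to be invertible. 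You correctly flag this and recover it from $\det\mathbf{A}=\det\mathbf{A}_1\cdot\det\mathbf{S}$ together with the implicit assumption that $\mathbf{A}$ itself is invertible; the paper's statement leaves this hypothesis tacit.
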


The next step is to prove $z>0$.
Based on the Sherman-Morrison-Woodbury formula~\cite{meyer2023matrix,cui2024ice},
i.e., 
\begin{equation}
\label{eq:SMW_formula}
    \mathbf{ (A+XRY)^{-1} = A^{-1}-A^{-1}X(R^{-1}+YA^{-1}X  )^{-1}YA^{-1}  },
\end{equation}
the matrix in the form of $\mathbf{X} (\mathbf{X}^{\sf{T}}\mathbf{X}+\beta \mathbf{I}_S)^{-1}\mathbf{X}^{\sf{T}} $ in \eqref{eq:z_expression} 
 can be rewritten as
 \begin{equation}
 \label{eq:SMW}
 \begin{split}
     &\mathbf{X} (\mathbf{X}^{\sf{T}}\mathbf{X}+\beta \mathbf{I}_S)^{-1}\mathbf{X}^{\sf{T}}
     \\
    = & \mathbf{ \beta}^{-1}\mathbf{X}\mathbf{X}^{\sf T}-\beta^{-1}\mathbf{XX}^{\sf T}(\mathbf{I}_{D}+\beta^{-1}\mathbf{XX}^{\sf T})^{-1}\beta^{-1}\mathbf{XX}^{\sf T}.
 \end{split}
 \end{equation}
 Using singular value decomposition, matrix $\mathbf{\beta^{-1}XX}^{\sf T}$ can be decomposed as
 \begin{equation}
 \label{eq:SVD}
     \mathbf{\beta^{-1}XX}^{\sf T}=\mathbf{U}\Sigma\mathbf{U}^{\sf T},
 \end{equation}
where $\mathbf{U}$ is the unitary matrix, and $\mathbf{\Sigma}$ is a low-rank diagonal matrix.

Substituting \eqref{eq:SVD} and \eqref{eq:SMW} into \eqref{eq:z_expression}, the scalar $z$ can be expressed as
\begin{align}
\label{eq:z_value}
    z & = \mathbf{y}^{\sf T}\mathbf{U}(\mathbf{I}- \Sigma+\Sigma \mathbf{U}^{\sf T} (\mathbf{I}_D  +\mathbf{U}\Sigma\mathbf{U}^{\sf T})^{-1} \mathbf{U} \Sigma   ) \mathbf{U}^{\sf T}\mathbf{y} +1 \notag \\
    & = \mathbf{y}^{\sf T}\mathbf{U}(\mathbf{I}-\Sigma+\Sigma  ( \mathbf{I}_D  +  \Sigma )^{-1}  \Sigma ) \mathbf{U}^{\sf T}\mathbf{y} +1   .
\end{align}
Let $\sigma_s$ denoted the $s$-th diagonal value of  $\Sigma$. The $s$-th diagonal value of  $(\mathbf{I}-\Sigma+\Sigma  ( \mathbf{I}_S  +  \Sigma )^{-1}  \Sigma )$ in \eqref{eq:z_value}, denoted as   $\psi_s$, can be expressed as 
\begin{equation}
\begin{split}
 \psi_s &=1-\sigma_s+\sigma_s(1+\sigma_s)^{-1}\sigma_s\\
 & =1-\sigma_s+\frac{\sigma^2_s}{1+\sigma_s}=\frac{1}{1+\sigma_s}>0.
\end{split}  
\end{equation}
This gives the $z>0$. Overall, the incremental DG can be expressed as
\begin{equation}
 \mathcal{G}(S+1,G)- \mathcal{G}(S,G)=
  \frac{1}{z}\left\Vert \hat{\mathbf{p}}^{\sf{T}} \hat{\mathbf{P}}^{\sf{T}}
  \boldsymbol{\mu}_{\ell\ell'}    \right\Vert^2_2>0,
\end{equation}
where $\hat{\mathbf{p}}$ comes from \eqref{eq:B_2}, given as
\begin{equation}
    \hat{\mathbf{p}}=\begin{bmatrix}
        (\mathbf{X}^{\sf{T}} \mathbf{X}+\beta \mathbf{I}_S)^{-1}\mathbf{X}^{\sf{T}} \mathbf{y} \\ -1
   \end{bmatrix}.
\end{equation}
This completes the proof.

\vspace{-3mm}
\subsection{Proof of Lemma \ref{Proce_Gain} }
\label{Proof_Proce_Gain}

Given the fixed projection matrix $\mathbf{P}$ with the dimensionality of $S$, the incremental discriminant by increase the processing gain from $G$ to $G+1$  can be expressed as
\begin{align}
     &\mathcal{G}(S,G)- \mathcal{G}(S,G+1)\\ 
    = &  \boldsymbol{\mu}_{\ell\ell'}^{\sf{T}}\mathbf{P}
     [ (\mathbf{P}^{\sf{T}}\mathbf{C} \mathbf{P}+\beta_1 \mathbf{I}_{S})^{-1} - (\mathbf{P}^{\sf{T}}\mathbf{C} \mathbf{P}+\beta_2 \mathbf{I}_{S})^{-1} ] \mathbf{P}^{\sf T}\boldsymbol{\mu}_{\ell\ell'}, \notag
\end{align}
where $\beta_1=\frac{\sigma^2_{\sf nor}}{|\mathcal{K}|^2(G+1)\gamma_{\sf sen}}< \beta_2=\frac{\sigma^2_{\sf nor}}{|\mathcal{K}|^2G\gamma_{\sf sen}}$.
Let $\mathbf{A}_1= \mathbf{P}^{\sf{T}}\mathbf{C} \mathbf{P}+\beta_1 \mathbf{I}_{S}$ and $\mathbf{A}_2= \mathbf{P}^{\sf{T}}\mathbf{C} \mathbf{P}+\beta_2 \mathbf{I}_{S}$ define the PD and symetric matrix. We have $\mathbf{A}_1\prec \mathbf{A}_2$, i.e., $\mathbf{A}_2- \mathbf{A}_1$ is PD matrix.
Since $\mathbf{A}_1$ and $\mathbf{A}_2$ commute, i.e., $\mathbf{A}_1\mathbf{A}_2=\mathbf{A}_2\mathbf{A}_1$, this results in the  inequality $ \mathbf{A}_1 ^{-1}   \succ \mathbf{A}_2 ^{-1}$.
This gives the positive incremental DG, and completes the proof.

\vspace{-3mm}
\subsection{Proof of Proposition \ref{Lemma:DG_DG}}
\label{Proof_DG_DG}

Given the compression matrix defined in \eqref{eq:eign_projection}, the covariance matrix  of  $\hat{\mathbf{y}}$  in \eqref{eq:covariance_matrix} can be expressed as a diagonal matirx:
\begin{equation}
\begin{split}
     \hat{\mathbf{C}}  =
\frac{1}{|\mathcal{K}|}
\mathbf{P}^{\sf{T}}\mathbf{C}\mathbf{P} + \frac{\sigma^2_{\sf nor}}{G|\mathcal{K}|^2\gamma_{\sf sen}}\mathbf{I}_S 
 = \frac{\Sigma_S}{|\mathcal{K}|}  + \frac{\hat{\sigma}^2 \mathbf{I}_S}{G|\mathcal{K}|}  ,
\end{split}
\label{cov_mat_impor}
\end{equation}
where $ \Sigma_S=\diag \{\lambda_1,\dots, \lambda_S\}$ is a diagonal maxtrix and $\hat{\sigma}^2=\frac{\sigma^2_{\sf nor} }{|\mathcal{K}|\gamma_{\sf sen}}$.
Substituting the obtained \eqref{cov_mat_impor} into \eqref{eq:subspace-DG}, the receive DG that ensures the compression matrix in the eigenspace of $\mathbf{C}$, denoted by $\hat{\mathcal{G}}(S,G)$ can be simplified  by
\begin{equation}
\begin{split} 
    \hat{\mathcal{G}}(S,G)
&= |\mathcal{K}| \sum_{d=1}^S \frac{\gamma_d^2/\lambda_d}{\hat{\sigma}^2/(G\lambda_d)+1}\\
\end{split} 
\end{equation}
where $\overline{\boldsymbol{\mu}}_{\ell \ell'}^{\sf T}=\boldsymbol{\mu}_{\ell \ell'}^{\sf T} \mathbf{U}$,  $\gamma^2_{d} =\overline{\mu}^2_{\ell\ell'}(d) $ is the $d$-th element of $\overline{\boldsymbol{\mu}}_\Delta$.
This completes the proof.

\vspace{-3mm}
\subsection{Proof of Proposition \ref{prop:OPT_feature}}
\label{proof_prop:OPT_feature}

To show  the montonoicity of $ \tilde{\phi}(S)$, we take the first derivative of $ \tilde{\phi}(S)$, given as
\begin{equation}
\label{eq:derive_1}
    \tilde{\phi}'(S) =\frac{|\mathcal{K}|\zeta(S)}{\left(\tilde{\sigma}^2 S +1\right)^2},
\end{equation}
where $\zeta(S)$ is given as $   \zeta(S) = \psi'(S)\left(\tilde{\sigma}^2 S + 1\right) - \tilde{\sigma}^2 \psi(S)$.
Since the denominator of  \eqref{eq:derive_1} is  
positive (i.e., $\left(\tilde{\sigma}^2 S + 1\right)^2>0$, the sign of $\tilde{\phi}'(S)$  is determined by the sign of $\zeta(S)$.
Then the first  derivative of $\zeta(S)$ can be computed as
$
\zeta'(S) = \psi''(S)\left(\tilde{\sigma}^2 S + 1\right).
$
Since $\psi''(S)=g'(S)\leq0$ and $\tilde{\sigma}^2 S + 1\geq 0$, we have $\zeta'(S)\leq 0$.
This implies $\zeta(S)$ is a monotone-decreasing function of $S$.


Since $\zeta(S)$ is strictly decreasing over its domain, the maximizer $x^*$ of $\tilde{\phi}(S)$ must either lie at a boundary point of $S$ or satisfy $\zeta(x^*) = 0$. 
A solution to $\zeta(S) = 0$ exists precisely when $\zeta(1)$ and $\zeta(D)$ have opposite signs, i.e., $\zeta(1)\,\zeta(D) < 0$. 
Under this condition, the optimal breathing depth $S^*$ is uniquely given by the root of $\zeta(S) = 0$. 
Since $S$ must take integer values, $S^*$ can be determined by inspecting the nearest feasible integers around $x^*$ and selecting the one that maximizes $\tilde{\phi}(S)$. 
If the sign condition $\zeta(1)\,\zeta(D) < 0$ does not hold, then no interior root exists, and the maximizer is at one of the endpoints: $S^* = \arg\max_{S \in \{1, D\}} \tilde{\phi}(S)$.
This completes the argument.

\bibliography{Ref}

\begin{thebibliography}{10}
\providecommand{\url}[1]{#1}
\csname url@samestyle\endcsname
\providecommand{\newblock}{\relax}
\providecommand{\bibinfo}[2]{#2}
\providecommand{\BIBentrySTDinterwordspacing}{\spaceskip=0pt\relax}
\providecommand{\BIBentryALTinterwordstretchfactor}{4}
\providecommand{\BIBentryALTinterwordspacing}{\spaceskip=\fontdimen2\font plus
\BIBentryALTinterwordstretchfactor\fontdimen3\font minus \fontdimen4\font\relax}
\providecommand{\BIBforeignlanguage}[2]{{%
\expandafter\ifx\csname l@#1\endcsname\relax
\typeout{** WARNING: IEEEtran.bst: No hyphenation pattern has been}%
\typeout{** loaded for the language `#1'. Using the pattern for}%
\typeout{** the default language instead.}%
\else
\language=\csname l@#1\endcsname
\fi
#2}}
\providecommand{\BIBdecl}{\relax}
\BIBdecl

\bibitem{Saad_6G}
W.~{Saad}, M.~{Bennis}, and M.~{Chen}, ``A vision of {6G} wireless systems: Applications, trends, technologies, and open research problems,'' \emph{IEEE Netw.}, vol.~34, no.~3, pp. 134--142, 2020.

\bibitem{CHEN2025}
Q.~Chen, Z.~Wang, X.~Chen, J.~Wen, D.~Zhou, S.~Ji, M.~Sheng, and K.~Huang, ``Space-ground fluid ai for {6G} edge intelligence,'' \emph{Engineering}, early access, 2025.

\bibitem{Huawei2022}
HUAWEI, ``6{G}: The next horizon from connected people and things to connected intelligence,'' [Online] \url{https://www-file.huawei.com/-/media/corp2020/pdf/tech-insights/1/6g-white-paper-en.pdf}, 2022 (accessed Dec. 1, 2022).

\bibitem{Shi2020CommEffEdgeAI}
Y.~Shi, K.~Yang, T.~Jiang, J.~Zhang, and K.~B. Letaief, ``Communication-efficient edge {AI}: Algorithms and systems,'' \emph{IEEE Commun. Surveys Tuts.}, vol.~22, no.~4, pp. 2167--2191, 2020.

\bibitem{ZW2024ultra-LoLa}
Z.~Wang, A.~E. Kalør, Y.~Zhou, P.~Popovski, and K.~Huang, ``Ultra-low-latency edge inference for distributed sensing,'' \emph{IEEE Trans. Wireless Commun.}, early access, 2025.

\bibitem{yang2025optBS}
\BIBentryALTinterwordspacing
H.~Yang, Z.~Wang, and K.~Huang, ``Optimal batch-size control for low-latency federated learning with device heterogeneity,'' 2025. [Online]. Available: \url{https://arxiv.org/abs/2507.15601}
\BIBentrySTDinterwordspacing

\bibitem{liu2025ISEAearly}
Z.~Liu, X.~Chen, H.~Wu, Z.~Wang, X.~Chen, D.~Niyato, and K.~Huang, ``Integrated sensing and edge {AI}: Realizing intelligent perception in {6G},'' \emph{IEEE Commun. Surveys Tuts.}, early access, 2025.

\bibitem{cui2025quansensing}
\BIBentryALTinterwordspacing
M.~Cui, Q.~Zeng, Z.~Wang, and K.~Huang, ``Realizing quantum wireless sensing without extra reference sources: Architecture, algorithm, and sensitivity maximization,'' 2025. [Online]. Available: \url{https://arxiv.org/abs/2504.21234}
\BIBentrySTDinterwordspacing

\bibitem{wei20223u}
W.~Wei, J.~Wang, Z.~Fang, J.~Chen, Y.~Ren, and Y.~Dong, ``3{U}: Joint design of {UAV-USV-UUV} networks for cooperative target hunting,'' \emph{IEEE Trans. Veh. Technol.}, vol.~72, no.~3, pp. 4085--4090, 2022.

\bibitem{wei2023differential}
W.~Wei, J.~Wang, J.~Du, Z.~Fang, Y.~Ren, and C.~P. Chen, ``Differential game-based deep reinforcement learning in underwater target hunting task,'' \emph{IEEE Trans. Neural Netw. Learn. Syst.}, 2023.

\bibitem{GX-BBA}
G.~Zhu, Y.~Wang, and K.~Huang, ``Broadband analog aggregation for low-latency federated edge learning,'' \emph{IEEE Trans. Wireless Commun.}, vol.~19, no.~1, pp. 491--506, 2020.

\bibitem{liu2025semantic}
\BIBentryALTinterwordspacing
Z.~Liu and K.~Huang, ``Semantic-relevance based sensor selection for edge-{AI} empowered sensing systems,'' 2025. [Online]. Available: \url{https://arxiv.org/abs/2503.12785}
\BIBentrySTDinterwordspacing

\bibitem{zw2025AIoutage}
\BIBentryALTinterwordspacing
Z.~Wang, Q.~Zeng, H.~Zheng, and K.~Huang, ``Revisiting outage for edge inference systems,'' 2025. [Online]. Available: \url{https://arxiv.org/abs/2504.03686}
\BIBentrySTDinterwordspacing

\bibitem{huang2025d}
J.~Huang, K.~Yuan, C.~Huang, and K.~Huang, ``D$^2$-{JSCC}: Digital deep joint source-channel coding for semantic communications,'' \emph{IEEE J. Sel. Areas Commun.}, 2025.

\bibitem{zeng2025ultra-lola}
\BIBentryALTinterwordspacing
Q.~Zeng, J.~Huang, Z.~Wang, K.~Huang, and K.~K. Leung, ``Ultra-low-latency edge intelligent sensing: A source-channel tradeoff and its application to coding rate adaptation,'' 2025. [Online]. Available: \url{https://arxiv.org/abs/2503.04645}
\BIBentrySTDinterwordspacing

\bibitem{zeng2024knowledge}
Q.~Zeng, Z.~Wang, Y.~Zhou, H.~Wu, L.~Yang, and K.~Huang, ``Knowledge-based ultra-low-latency semantic communications for robotic edge intelligence,'' \emph{IEEE Trans. Commun.}, vol.~73, no.~7, pp. 4925--4940, 2025.

\bibitem{who2com}
Y.-C. Liu, J.~Tian, C.-Y. Ma, N.~Glaser, C.-W. Kuo, and Z.~Kira, ``Who2com: Collaborative perception via learnable handshake communication,'' in \emph{Proc. IEEE Int. Conf. Robot. Autom. (ICRA)}, 2020, pp. 6876--6883.

\bibitem{cang2024joint}
Y.~Cang, M.~Chen, and K.~Huang, ``Joint batching and scheduling for high-throughput multiuser edge {AI} with asynchronous task arrivals,'' \emph{IEEE Trans. Wireless Commun.}, vol.~23, no.~10, pp. 13\,782--13\,795, 2024.

\bibitem{Chen-TWC-2019}
E.~Li, L.~Zeng, Z.~Zhou, and X.~Chen, ``Edge {AI}: On-demand accelerating deep neural network inference via edge computing,'' \emph{IEEE Trans. Wireless Commun.}, vol.~19, no.~1, pp. 447--457, 2019.

\bibitem{ZJ-CoM-2020}
J.~Shao and J.~Zhang, ``Communication-computation trade-off in resource-constrained edge inference,'' \emph{IEEE Commun. Mag.}, vol.~58, no.~12, pp. 20--26, 2020.

\bibitem{OTA_survey}
A.~Şahin and R.~Yang, ``A survey on over-the-air computation,'' \emph{IEEE Commun. Surveys Tuts.}, vol.~25, no.~3, pp. 1877--1908, 2023.

\bibitem{Zhiyan-AirPooling}
Z.~Liu, Q.~Lan, A.~E. Kalør, P.~Popovski, and K.~Huang, ``Over-the-air multi-view pooling for distributed sensing,'' \emph{IEEE Trans. Wireless Commun.}, vol.~23, no.~7, pp. 7652--7667, 2024.

\bibitem{liu2025over}
Z.~Liu, Q.~Lan, and K.~Huang, ``Over-the-air fusion of sparse spatial features for integrated sensing and edge {{AI}} over broadband channels,'' \emph{IEEE Trans. Wireless Commun.}, vol.~24, no.~4, pp. 2999--3013, 2025.

\bibitem{Xu-JSAC}
X.~Chen, K.~B. Letaief, and K.~Huang, ``On the view-and-channel aggregation gain in integrated sensing and edge {AI},'' \emph{IEEE J. Sel. Areas Commun.}, vol.~42, no.~9, pp. 2292--2305, 2024.

\bibitem{cao2020cooperative}
X.~Cao, G.~Zhu, J.~Xu, and K.~Huang, ``Cooperative interference management for over-the-air computation networks,'' \emph{IEEE Trans. Wireless Commun.}, vol.~20, no.~4, pp. 2634--2651, 2020.

\bibitem{cao2020optimized}
------, ``Optimized power control for over-the-air computation in fading channels,'' \emph{IEEE Trans. Wireless Commun.}, vol.~19, no.~11, pp. 7498--7513, 2020.

\bibitem{zhang2022interference}
H.~Zhang, L.~Chen, N.~Zhao, Y.~Chen, and F.~R. Yu, ``Interference management of analog function computation in multicluster networks,'' \emph{IEEE Trans. Commun.}, vol.~70, no.~7, pp. 4607--4623, 2022.

\bibitem{qiao-twocell}
Q.~Lan, H.~S. Kang, and K.~Huang, ``Simultaneous signal-and-interference alignment for two-cell over-the-air computation,'' \emph{IEEE Wireless Commun. Lett.}, vol.~9, no.~9, pp. 1342--1345, 2020.

\bibitem{wen2023task}
D.~Wen, P.~Liu, G.~Zhu, Y.~Shi, J.~Xu, Y.~C. Eldar, and S.~Cui, ``Task-oriented sensing, computation, and communication integration for multi-device edge {{AI}},'' \emph{IEEE Trans. Wireless Commun.}, vol.~23, no.~3, pp. 2486--2502, 2024.

\bibitem{wen2023taskOTA}
D.~Wen, X.~Jiao, P.~Liu, G.~Zhu, Y.~Shi, and K.~Huang, ``Task-oriented over-the-air computation for multi-device edge {AI},'' \emph{IEEE Trans. Wireless Commun.}, 2023.

\bibitem{huang2025visual}
J.~Huang, Q.~Zeng, and K.~Huang, ``Visual fidelity index for generative semantic communications with critical information embedding,'' \emph{arXiv preprint arXiv:2505.10405}, 2025.

\bibitem{ZW_spectrum}
Z.~Wang, K.~Huang, and Y.~C. Eldar, ``Spectrum breathing: Protecting over-the-air federated learning against interference,'' \emph{IEEE Trans. Wireless Commun.}, vol.~23, no.~8, pp. 10\,058--10\,071, 2024.

\bibitem{ModelNet-Ref}
H.~Su, S.~Maji, E.~Kalogerakis, and E.~Learned-Miller, ``Multi-view convolutional neural networks for {3D} shape recognition,'' in \emph{Proc. IEEE Int. Conf. Comput. Vision (ICCV)}, Santiago, Chile, Dec. 7-13 2015.

\bibitem{figueroa2019semi}
J.~A. Figueroa, ``Semi-supervised learning using deep generative models and auxiliary tasks,'' in \emph{Proc. Adv. Neural Inf. Process. Syst. (NeurIPS) Workshop}, Vancouver, Canada, Dec. 13--14 2019.

\bibitem{wei2025pipelining}
W.~Wei, Z.~Lin, T.~Li, X.~Li, and X.~Chen, ``Pipelining split learning in multi-hop edge networks,'' \emph{arXiv preprint arXiv:2505.04368}, 2025.

\bibitem{Worst-Gaussian}
I.~Shomorony and A.~S. Avestimehr, ``Worst-case additive noise in wireless networks,'' \emph{IEEE Trans. Inf. Theory}, vol.~59, no.~6, pp. 3833--3847, 2013.

\bibitem{Friedman-Book-2009}
T.~Hastie, R.~Tibshirani, J.~H. Friedman, and J.~H. Friedman, \emph{The elements of statistical learning: data mining, inference, and prediction}.\hskip 1em plus 0.5em minus 0.4em\relax Springer, 2009, vol.~2.

\bibitem{jianhao-JSCC}
J.~Huang, K.~Yuan, C.~Huang, and K.~Huang, ``D\textsuperscript{2}-{JSCC}: Digital deep joint source-channel coding for semantic communications,'' \emph{IEEE J. Sel. Areas Commun.}, vol.~43, no.~4, pp. 1246--1261, Jan. 2025.

\bibitem{bourtsoulatze2019deep}
E.~Bourtsoulatze, D.~B. Kurka, and D.~G{\"u}nd{\"u}z, ``Deep joint source-channel coding for wireless image transmission,'' \emph{IEEE Trans. Cog. Commun. Net.}, vol.~5, no.~3, pp. 567--579, May 2019.

\bibitem{abdi2010principal}
H.~Abdi and L.~J. Williams, ``Principal component analysis,'' \emph{Wiley Interdiscip. Rev.: Comput. Stat.}, vol.~2, no.~4, pp. 433--459, 2010.

\bibitem{IEEE802.11b}
{IEEE}, ``{IEEE} {S}tandard 802.11b-1999: Wireless {LAN} medium access control ({MAC}) and physical layer ({PHY}) specifications,'' Standard, Institute of Electrical and Electronics Engineers (IEEE), Tech. Rep., 1999.

\bibitem{simonyan2015deep}
\BIBentryALTinterwordspacing
K.~Simonyan and A.~Zisserman, ``Very deep convolutional networks for large-scale image recognition,'' 2015. [Online]. Available: \url{https://arxiv.org/abs/1409.1556}
\BIBentrySTDinterwordspacing

\bibitem{bhatia2013matrix}
R.~Bhatia, \emph{Matrix analysis}.\hskip 1em plus 0.5em minus 0.4em\relax Springer Science \& Business Media, 2013, vol. 169.

\bibitem{meyer2023matrix}
C.~D. Meyer, \emph{Matrix analysis and applied linear algebra}.\hskip 1em plus 0.5em minus 0.4em\relax SIAM, 2023.

\bibitem{cui2024ice}
M.~Cui, Z.~Zhang, L.~Dai, and K.~Huang, ``Ice-filling: Near-optimal channel estimation for dense array systems,'' \emph{IEEE Trans. Wireless Commun.}, early access, 2025.

\end{thebibliography}
\bibliographystyle{IEEEtran}

\end{document}